\newtheorem{thm}{Theorem}[section]
\newtheorem{prop}[thm]{Proposition}
\newtheorem{lemma}[thm]{Lemma}
\newtheorem{dfn}{Definition}[section]
\newtheorem{rmk}{Remark}[section]
\def\Ker{\mathop{\rm Ker}\nolimits}
\title{Transductive versions of the LASSO\\and the Dantzig Selector}
\author{Pierre Alquier and Mohamed Hebiri}
\date{}
\begin{document}

\maketitle

\begin{abstract} 
We consider the linear regression problem, where the number $p$ of covariates is possibly
larger than the number $n$ of observations $(x_{i},y_{i})_{i\leq i \leq n}$, under sparsity
assumptions. On the one hand, several methods have been successfully proposed to perform this
task, for example the LASSO in \cite{Tibshirani-LASSO} or the Dantzig Selector
in \cite{Dantzig}.
On the other hand, consider new values $(x_{i})_{n+1\leq i \leq m}$. If one wants to estimate the
corresponding $y_{i}$'s, one should think of a specific estimator devoted to this task,
referred in \cite{Vapnik} as a "transductive" estimator.
This estimator may differ from
an estimator designed to the more general task "estimate on the whole domain".
In this work, we propose a generalized version both of the LASSO and the Dantzig Selector,
based on the geometrical
remarks about the LASSO in \cite{CSEL,L1MOH}. The "usual" LASSO and Dantzig Selector, as well as new estimators
interpreted as transductive versions of
the LASSO, appear
as special cases.
These estimators are interesting at least from a theoretical point of view: we can give theoretical
guarantees for these estimators under hypotheses that are relaxed versions of the hypotheses required in the papers
about the "usual" LASSO.
These estimators can also be efficiently computed, with results comparable to the ones of the LASSO.
\end{abstract}

\tableofcontents 

\section{Introduction}

In many modern applications, a statistician often have to deal with very large datasets.
Regression problems may involve a large number of covariates $p$, possibly larger than the
sample size $n$. In this situation, a major issue is dimension reduction, which can be
performed through the selection of a small amount of relevant covariates.
For this purpose, numerous regression methods have been proposed in the literature, ranging
from the classical information criteria such as
$\mathop{\rm AIC}$ \cite{aic} and $\mathop{\rm BIC}$ \cite{bic} to the
more recent sparse methods, known as the LASSO \cite{Tibshirani-LASSO},
and the Dantzig Selector \cite{Dantzig}. Regularized
regression methods have recently witnessed several developments due to the attractive feature
of computational feasibility, even for high dimensional data  (i.e., when the number of
covariates $p$ is large).
We focus on the usual
linear regression model:
\begin{equation} \label{eq_depart}
y_{i}= x_{i} \beta^* + \varepsilon_{i}, \quad \quad i=1,\ldots,n,
\end{equation}
where the design $x_{i}=(x_{i,1},\ldots,x_{i,p}) \in \mathbb{R}^p$ is deterministic,
$\beta^*=(\beta^*_1,\ldots,\beta^*_p)' \in \mathbb{R}^p$ is the unknown parameter
and $\varepsilon_1,\ldots,\varepsilon_n$ are i.i.d. centered Gaussian random
variables with known variance $\sigma^{2}$.
Let $X$ denote the matrix with $i$-th line equal to $x_i$, and let $X_{j}$ denote its $j$-th
column, with $i\in\{1,\ldots,n\}$ and $j\in\{1,\ldots,p\}$. So: $$X=(x'_1 , \ldots, x'_n )'
= (X_1,\ldots,X_p).$$
For the sake of simplicity, we will assume that the observations are normalized in such
a way that $X_{j}'X_{j}/n=1$. 
We denote by $Y$ the vector $Y=(y_{1},\ldots,y_{n})'$.\\
\noindent For all $\alpha \leq 1$ and any vector $v\in\mathds{R}^{d}$, we set $ \|\cdot \|_{\alpha}$,
the norm: $ \|v\|_{\alpha} = (|v_{1}|^{\alpha}+\ldots+|v_{d}|^{\alpha})^{1/\alpha}$.
In particular $\|\cdot\|_{2}$ is the euclidean norm.
Moreover for all $d\in\mathds{N}$, we use the notation
$ \|v\|_{0} = \sum_{i=1}^{d} \mathds{1}(v_{i}\neq 0). $

The problem of estimating the regression parameter in the high dimensional setting have been extensively studied in the statistical literature. Among others, the LASSO \cite{Tibshirani-LASSO} (denoted by $\hat{\beta}^{L}$), the Dantzig Selector \cite{Dantzig}
(denoted by $\hat{\beta}^{DS}$) and the non-negative garrote (in Yuan and Lin \cite{garrotte}, denoted by
$\hat{\beta}^{NNG}$) have been proposed to deal with this problem for a large $p$, even for $p>n$.
These estimators give very good practical results. For instance in~\cite{Tibshirani-LASSO}, simulations
and tests on real data have been provided for the LASSO.
We also refer to \cite{KoltchDant,Koltchl1plus,VanGpLass,VandeGeerSparseLasso,ArnakTsyb,ChriMo7GpLass} for
related work with different estimators: non-quadratic loss, penalties slightly different
from $\ell_1$ and random design.

\noindent From a theoretical point of view, Sparsity Inequalities (SI) have been proved for these
estimators under different assumptions. That is upper bounds of order of $\mathcal{O}\left(\sigma^{2} \|\beta^*\|_{0} \log(p)/n\right)$ for the errors
$(1/n)\|X\hat{\beta} - X\beta^{*}\|_{2}^{2}$
and $\|\hat{\beta} - \beta^{*}\|_{2}^{2}$ have been derived, where $\hat{\beta}$ is one of the estimators mentioned above.
In particular these bounds involve the number of non-zero coordinates in $\beta^{*}$ (multiplied by $\log(p)$),
instead of dimension $p$. Such bounds garanty that under some assumptions, $X \hat{\beta}$ and $\hat{\beta}$ are good estimators of $X\beta^{*}$ and $\beta^{*}$ respectively. According to the LASSO $\hat{\beta}^{L}$, these SI are given for example in \cite{BTWAggSOI,Lasso3}, whereas \cite{Dantzig,Lasso3} provided SI for the Dantzig Selector $\hat{\beta}^{DS}$.
On the other hand, Bunea \cite{Bunea_consist} establishes conditions which ensure $\hat{\beta}^{L}$ and $\beta^{*}$ have the same null coordinates. Analog results for $\hat{\beta}^{DS}$ can be found in~\cite{KarimNormSup}.

Now, let us assume that
we are given additional observations $x_{i}\in\mathbb{R}^p$ for $n+1\leq i \leq m$ (with $m>n$),
and introduce the matrix $Z=(x'_{1} , \ldots, x'_{m} )'$. Assume that
the objective of the statistician is precisely to estimate $Z\beta^{*}$: namely, he cares about predicting what would
be the labels attached to the additional $x_{i}$'s. It is argued in \cite{Vapnik}
that in such a case, a specific estimator devoted to this task should be considered: the transductive estimator.
This estimator differs from an estimator
tailored for the estimation of $\beta^{*}$ or $X\beta^{*}$ like the LASSO. Indeed one usually 
builds an estimator $\hat{\beta}(X,Y)$ and then computes $Z\hat{\beta}(X,Y)$ to estimate
$Z\beta^*$.
The approach taken here is to consider estimators  $\hat{\beta}(X,Y,Z)$ exploiting the knowledge of $Z$, and then to compute $Z\hat{\beta}(X,Y,Z)$.

Some methods in supervised classification or regression were successfully extended to the
transductive setting, such as the well-known Support Vector Machines (SVM) in
\cite{Vapnik}, the Gibbs estimators in \cite{manuscrit}. It
is argued in the semi-supervised learning literature (see for example \cite{semi-sup} for
a recent survey) that taking into account the information
on the design given by the new additional $x_{i}$'s has a stabilizing effect on
the estimator.

In this paper, we study a family of estimators
which generalizes the LASSO and the Dantzig Selector.
The considered family depends on a $q\times p$ matrix $A$, with $q\in\mathds{N}$,
whose choice allows to adapt the estimator to the objective of the statistician.
The choice of the matrix $A$ allows to cover transductive setting.

The rest of paper is organized as follows.
In the next section, we motivate the use of the studied family of estimators through geometrical considerations stated in \cite{L1MOH}.
In Sections \ref{easy} and \ref{hard}, we establish Sparsity Inequalities for these estimators.
A discussion on the assumptions needed to prove the SI is also provided.
In particular, it is shown that the estimators devoted to the transductive setting satisfy these SI with weaker assumptions that those needed by the LASSO or the Dantzig Selector, when $m > p > n$. That is, when the number of news points is large enough.
The implementation of our estimators and some numerical experiments are the purpose of Section \ref{simu}.
The results clearly show that the use of a transductive version of the LASSO may improve the performance
of the estimation. All proofs of the theoretical results are postponed to Section \ref{proofs}.

\section{Preliminaries}

In this section we state geometrical considerations (projections on a confidence region) for the LASSO and the Dantzig Selector. These motivate the introduction of our estimators. Finally we discuss the different objectives considered in this paper.

Let us remind that a definition of the LASSO estimate is given by
\begin{equation}
\label{LASSO-def1}
 \arg\min_{\beta \in \mathds{R}^{p}} \left\{\left\|Y-X\beta\right\|_{2}^{2} + 2\lambda \|\beta\|_{1} \right\}.
\end{equation}
A dual form (in \cite{DualLasso}) of this program is also of interest:
\begin{equation}
\label{LASSO-def2}
\left\{
\begin{array}{l}
\arg\min_{\beta \in \mathds{R}^{p}} \left\|X\beta\right\|_{2}^{2}
\\
\\
s. t. \left\|X'(Y-X\beta)\right\|_{\infty} \leq \lambda;
\end{array}
\right.
\end{equation}
actually it is proved in \cite{CSEL} that any solution of Program \ref{LASSO-def2} is a solution of
Program \ref{LASSO-def1} and that the set $\{X\beta\}$ is the same where $\beta$ is taken among all the
solutions of Program \ref{LASSO-def1} or among all the solutions of \ref{LASSO-def2}. So both programs
are equivalent in terms of estimating $X\beta^{*}$.

Now, let us remind the definition of the Dantzig Selector:
\begin{equation}
\label{DS-def}
\left\{
\begin{array}{l}
\arg\min_{\beta \in \mathds{R}^{p}} \left\|\beta\right\|_{1}
\\
\\
s. t. \left\|X'(Y-X\beta)\right\|_{\infty} \leq \lambda.
\end{array}
\right.
\end{equation}

Alquier \cite{CSEL} observed that both Programs \ref{LASSO-def2} and \ref{DS-def} can be seen as
a projection of the null vector $\mathbf{0_p}$ onto the region $\{\beta:\left\|X'(Y-X\beta)\right\|_{\infty} \leq \lambda\}$ that can
be interpreted as a confidence region, with confidence $1-\eta$, for a given $\lambda$
that depends on $\eta$ (see Lemma \ref{usefullemma} here for example).
The difference between the two programs is the distance (or semi-distance) used for the projection.

Based on these geometrical considerations, we proposed in \cite{L1MOH} to study the following transductive
estimator:
\begin{equation}
\label{ancienTL}
\left\{
\begin{array}{l}
\arg\min_{\beta \in \mathds{R}^{p}} \left\|Z\beta\right\|_{2}^{2}
\\
\\
s. t. \left\|X'(Y-X\beta)\right\|_{\infty} \leq \lambda;
\end{array}
\right.
\end{equation}
that is a projection on the same confidence region, but using a distance adapted to the transductive estimation
problem. We proved a Sparsity Inequality for this estimator exploiting a novel sparsity measure.

In this paper, we propose a generalized version of the LASSO and of the Dantzig Selector,
based on the same geometrical remark.
More precisely for $q\in \mathds{N}^*$, let $A$ be a $q\times p$ matrix. We propose two general estimators,
$\hat{\beta}_{A,\lambda}$ (extension of the LASSO, based on a generalization of Program
\ref{LASSO-def1}) and $\tilde{\beta}_{A,\lambda}$
(transductive Dantzig Selector, generalization of Program \ref{DS-def}). These novel estimators depend on
two tuning parameters: $\lambda>0$ is a regularization parameter, it plays the same role
as the tuning parameter involved
in the LASSO, and the matrix $A$ that will allow to adapt the estimator
to the objective of the statistician. More particularly, depending on the choice of
the matrix $A$, this estimator can be adapted to one of the following objectives:
\begin{itemize}
\item {\bf denoising objective:} the estimation of $X\beta^{*}$, that is a denoised
version of $Y$. For this purpose, we consider the estimator $\hat{\beta}_{A,\lambda}$, with $A=X$. In this case, the estimator will actually be
equal to the LASSO $\hat{\beta}^{L}$ and $\tilde{\beta}_{A,\lambda}$, with the same choice $A=X$ will be equal
to the Dantzig Selector;
\item {\bf transductive objective:} the estimation of $Z\beta^{*}$, by $\hat{\beta}_{A,\lambda}$ or $\tilde{\beta}_{A,\lambda}$, with $A=\sqrt{n/m}Z$.
We will refer the corresponding estimators as the "Transductive LASSO" and "Transductive Dantzig
Selector";
\item {\bf estimation objective:} the estimation of $\beta^{*}$ itself, by $\hat{\beta}_{A,\lambda}$, with $A=\sqrt{n}I$.
In this case,
it appears that both estimators are well defined only in the case $p<n$ and are equal to
a soft-thresholded version of the usual least-square estimator.
\end{itemize}

For both estimators and all the above objectives, we prove SI (Sparsity Inequalities). Moreover, we show that
these estimators can easily be computed.

\section{The "easy case": $ \Ker(X)=\Ker(Z)$}
\label{easy}

In this section, we deal with the "easy case", where $\Ker(A)=\Ker(X)$ (think of
$A=X$, $A=\sqrt{n}I$ or $A=\sqrt{n/m}Z$). This setting
is natural at least in the case $p<n$ where both kernels are equal to $\{0\}$ in general. We provide SI (Sparsity Inequality, Theorem \ref{mainthm}) for the studied estimators, based on the techniques developed in \cite{Lasso3}.

\subsection{Definition of the estimators}

\begin{dfn}
\label{est}
For a given parameter $\lambda\geq 0$ and any matrix $A$ such that $Ker(A)=Ker(X)$,
we consider the estimator given by
$$
 \hat{\beta}_{A,\lambda} \in\arg\min_{\beta \in \mathds{R}^{p}}
\Bigl\{ -2Y'X(\widetilde{X'X})^{-1} (A'A) \beta
+ \beta' (A'A) \beta+ 2\lambda \|\Xi_{A} \beta\|_{1} \Bigr\} ,
$$
where $(\widetilde{X'X})^{-1}$ is exactly $(X'X)^{-1}$ if $(X'X)$ is invertible, and any pseudo-inverse of this
matrix otherwise, and where $\Xi_{A}$ is a diagonal matrix whose $(j,j)$-th coefficient is $\xi_{j}^{\frac{1}{2}}(A)$
with $ \xi_{j}(A) = \frac{1}{n}[ (A'A) (\widetilde{X'X})^{-1} (A'A)]_{j,j}$.
\end{dfn}

\begin{rmk}
\label{estrmk}
Equivalently we have
$$
 \hat{\beta}_{A,\lambda} \in\arg\min_{\beta \in \mathds{R}^{p}}
\Bigl\{ \left\|\tilde{Y}_{A}-A\beta\right\|_{2}^{2} + 2\lambda \|\Xi_{A} \beta\|_{1} \Bigr\} ,
$$
where $ \tilde{Y}_{A} = A(\widetilde{X'X})^{-1}X'Y$.
\end{rmk}

Actually, we are going to consider three particular cases of this estimator in this work, depending on
the objective of the statistician:
\begin{itemize}
\item {\bf denoising objective:} the LASSO, denoted here by $\hat{\beta}_{X,\lambda}$, given by
\begin{multline*}
\hat{\beta}_{X,\lambda} \in
 \arg\min_{\beta \in \mathds{R}^{p}} \left\{\left\|Y-X\beta\right\|_{2}^{2} + 2\lambda \|\beta\|_{1} \right\}
\\ =
 \arg\min_{\beta \in \mathds{R}^{p}} \left\{-2Y'X\beta + \beta'X'X\beta + 2\lambda \|\beta\|_{1} \right\}
\end{multline*}
(note that in this case, $\Xi_{X}=I$ since $X$ is normalized);
\item {\bf transductive objective:} the Transductive LASSO, denoted here by $\hat{\beta}_{\sqrt{n/m}Z,\lambda}$,
given by
$$
\hat{\beta}_{\sqrt{\frac{n}{m}}Z,\lambda} \in\arg\min_{\beta \in \mathds{R}^{p}}
\Bigl\{  \frac{n}{m}\left\|\tilde{Y}_{Z}-Z\beta\right\|_{2}^{2}
+ 2\lambda \|\Xi_{\frac{n}{m}Z'Z} \beta\|_{1} \Bigr\};
$$
\item {\bf estimation objective:} $\hat{\beta}_{\sqrt{n}I,\lambda}$, defined by
$$
\hat{\beta}_{\sqrt{n}I,\lambda} \in\arg\min_{\beta \in \mathds{R}^{p}}
\Bigl\{ n \left\|\tilde{Y}_{I}-\beta\right\|_{2}^{2} + 2\lambda \|\Xi_{\sqrt{n}I} \beta\|_{1} \Bigr\}.
$$
\end{itemize}

Let us give the analogous definition for an extension of the Dantzig Selector.

\begin{dfn}
\label{est2}
For a given parameter $\lambda>0$ and any matrix $A$ such that $Ker(A)=Ker(X)$,
we consider the estimator given by
\begin{equation}
\label{eq:DantzigGenerlll}
\tilde{\beta}_{A,\lambda}
=\left\{
\begin{array}{l}
\arg\min_{\beta \in \mathds{R}^{p}} \left\|\beta\right\|_{1}
\\
\\
s. t. \left\|\Xi_{A}^{-1} A'A ((\widetilde{X'X})^{-1}X'Y-\beta)\right\|_{\infty} \leq \lambda.
\end{array}
\right.
\end{equation}
\end{dfn}

Here again, we are going to consider three cases, for $A=X$, $A=\sqrt{n/m}Z$ and $A=\sqrt{n}I$,
and it is easy to check that for $A=X$ we have exactly the usual definition of the Dantzig
Selector (Program \ref{DS-def}). Moreover, here again, note that we can rewrite this estimator:
\begin{equation*}
\tilde{\beta}_{A,\lambda}
=\left\{
\begin{array}{l}
\arg\min_{\beta \in \mathds{R}^{p}} \left\|\beta\right\|_{1}
\\
\\
s. t. \left\|\Xi_{A}^{-1} A'(\tilde{Y}_{A}-A \beta)\right\|_{\infty} \leq \lambda.
\end{array}
\right.
\end{equation*}

The following proposition provides an interpretation of our estimators when $A=  \sqrt{n}I$.

\begin{prop}
\label{propLSE}
Let us assume that $(X'X)$ is invertible. Then $\hat{\beta}_{\sqrt{n}I,\lambda}=\tilde{\beta}_{\sqrt{n}I,\lambda}$
and this
is a soft-thresholded
least-square estimator:
let us put $\hat{\beta}^{LSE} = (X'X)^{-1}X'Y$ then $\hat{\beta}_{\sqrt{n}I,\lambda}$
is the vector obtained by replacing the $j$-th
coordinate $b_{j}=\hat{\beta}^{LSE}_{j}$
of $\hat{\beta}^{LSE}$ by $sgn(b_{j})\left(|b_{j}|-\lambda\xi_{j}(nI)/n\right)_{+}$, where we use the
standard notation $sgn(x)=+1$ if $x\geq 0$, $sgn(x)=-1$ if $x< 0$ and $(x)_{+}=\max(x,0)$.
\end{prop}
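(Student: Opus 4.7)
The plan is to exploit the fact that for $A = \sqrt{n}I$ we have $A'A = nI$, a diagonal matrix, so both the penalized objective defining $\hat{\beta}_{\sqrt{n}I,\lambda}$ and the constraint defining $\tilde{\beta}_{\sqrt{n}I,\lambda}$ decouple across coordinates.

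For $\hat{\beta}_{\sqrt{n}I,\lambda}$, I would start from the reformulation in Remark \ref{estrmk}. Since $(X'X)$ is assumed invertible, $\tilde{Y}_{\sqrt{n}I} = \sqrt{n}(X'X)^{-1}X'Y = \sqrt{n}\,\hat{\beta}^{LSE}$, and the objective becomes
$$n\sum_{j=1}^{p}\bigl(\beta_j - \hat{\beta}^{LSE}_j\bigr)^{2} + 2\lambda\sum_{j=1}^{p}\xi_j^{1/2}(\sqrt{n}I)\,|\beta_j|,$$
which separates into $p$ independent univariate problems of the form $\min_{t}(t-a)^{2}+2c|t|$. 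The standard soft-thresholding argument (via subdifferential of $|\cdot|$) gives the minimizer $\mathrm{sgn}(a)(|a|-c)_{+}$, and identifying $a=\hat{\beta}^{LSE}_j$ and $c=\lambda\,\xi_j^{1/2}(\sqrt{n}I)/n$ produces the claimed soft-thresholded formula.

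For $\tilde{\beta}_{\sqrt{n}I,\lambda}$, I would rewrite the Dantzig constraint. Because $A'A = nI$ and $\Xi_A$ is diagonal,
$$\bigl\|\Xi_{\sqrt{n}I}^{-1}\,A'A\,(\hat{\beta}^{LSE}-\beta)\bigr\|_{\infty}\leq\lambda \iff |\hat{\beta}^{LSE}_j - \beta_j| \leq \lambda\,\xi_j^{1/2}(\sqrt{n}I)/n \text{ for every }j.$$
The feasible set is thus a product of $p$ closed intervals centered at the least-squares estimate, and minimizing $\|\beta\|_1 = \sum_j |\beta_j|$ over such a set trivially decouples. For each coordinate, a case analysis on whether $0$ lies in $[\hat{\beta}^{LSE}_j-c_j,\,\hat{\beta}^{LSE}_j+c_j]$ gives the minimizer $\mathrm{sgn}(\hat{\beta}^{LSE}_j)(|\hat{\beta}^{LSE}_j|-c_j)_{+}$, with the same $c_j$ as before. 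This coincides with the output of Step~1, proving $\hat{\beta}_{\sqrt{n}I,\lambda} = \tilde{\beta}_{\sqrt{n}I,\lambda}$.

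There is no substantive obstacle: both halves reduce to the one-dimensional optimizations $\min_t(t-a)^2 + 2c|t|$ and $\min_t |t|$ on an interval, whose solutions are classical. The only care needed is bookkeeping of the factors $n$, $\sqrt{n}$, and the weights $\xi_j^{1/2}(\sqrt{n}I)$ so that the threshold matches exactly the form stated in the proposition.
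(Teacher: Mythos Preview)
Your proposal is correct and follows essentially the same route as the paper: both reduce the $\hat{\beta}$ problem to a separable sum of one-dimensional penalized least-squares problems solved by soft-thresholding, and both rewrite the $\tilde{\beta}$ constraint as a coordinatewise box around $\hat{\beta}^{LSE}$ over which $\|\beta\|_{1}$ is minimized. The paper is terser on the Dantzig half (it simply records the constrained form and declares the rest easy), whereas you spell out the case analysis, but the argument is the same.
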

Proposition~\ref{dualLASSO} deals with a dual definition of the estimator $\hat{\beta}_{A,\lambda}$.
\begin{prop}
\label{dualLASSO}
When $\Ker(A)=\Ker(X)$, the solutions $\beta$ of the following program:
\begin{equation*}
\left\{
\begin{array}{l}
\arg\min_{\beta \in \mathds{R}^{p}} \left\|A \beta\right\|_{2}^{2}
\\
\\
s. t. \left\|\Xi_{A}^{-1} A'((\tilde{Y}_{A}-A\beta)\right\|_{\infty} \leq \lambda
\end{array}
\right.
\end{equation*}
all satisfy $X\beta = X\hat{\beta}_{A,\lambda}$ and $A\beta = A\hat{\beta}_{A,\lambda}$.
\end{prop}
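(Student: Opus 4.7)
The plan is to treat this as a standard convex-duality identification: I will show that $\hat\beta_{A,\lambda}$ from Definition \ref{est} is itself a minimizer of the constrained program in the statement, and then use strict convexity of $\|A\beta\|_2^2$ as a function of $A\beta$ to pin down every other minimizer.

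First I would write the KKT (subgradient) optimality condition for the primal program given in Remark \ref{estrmk}: there exists $\hat s\in\mathds{R}^p$ with $|\hat s_j|\leq 1$ for all $j$ and $\hat s_j=\mathrm{sgn}(\hat\beta_{A,\lambda,j})$ whenever the latter is nonzero, such that
$$
A'\bigl(\tilde Y_A - A\hat\beta_{A,\lambda}\bigr) \;=\; \lambda\,\Xi_A \hat s .
$$
(Note that $\Xi_A$ is invertible under $\Ker(A)=\Ker(X)$ together with the normalization $X_j'X_j/n=1$, since $\xi_j(A)=0$ would force $Ae_j=0$, hence $e_j\in\Ker(A)=\Ker(X)$, hence $X_j=0$.) Left-multiplying by $\Xi_A^{-1}$ and taking the $\ell_\infty$ norm shows immediately that $\hat\beta_{A,\lambda}$ is feasible for the constrained program of the statement.

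Next I would check that $\hat\beta_{A,\lambda}$ is in fact \emph{optimal} for that program, by producing KKT multipliers. With $\mu_j^\pm\geq 0$ attached to the two linear constraints $\pm(\Xi_A^{-1}A'(\tilde Y_A - A\beta))_j\leq\lambda$, the stationarity condition reduces to
$$
A'A\bigl[\,2\beta - \Xi_A^{-1}(\mu^+-\mu^-)\,\bigr] = 0.
$$
Setting $\mu_j^\pm = 2\bigl(\Xi_A\hat\beta_{A,\lambda}\bigr)_j^{\pm}$ (positive, resp.\ negative parts) gives $\Xi_A^{-1}(\mu^+-\mu^-)=2\hat\beta_{A,\lambda}$, so stationarity holds; dual feasibility is built in, and complementary slackness follows from the sign information carried by $\hat s$ (e.g.\ $\mu_j^+>0$ forces $\hat\beta_{A,\lambda,j}>0$, hence $\hat s_j=1$, hence the $+$ constraint at index $j$ is active by the primal KKT relation). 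Since the constrained program is convex (quadratic objective, linear constraints), KKT is sufficient for optimality, so $\hat\beta_{A,\lambda}$ is a minimizer.

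Finally, let $\beta$ be any other minimizer. Strict convexity of $u\mapsto\|u\|_2^2$ forces $A\beta=A\hat\beta_{A,\lambda}$: otherwise the midpoint $(\beta+\hat\beta_{A,\lambda})/2$ is feasible and attains a strictly smaller objective value, a contradiction. Then $\beta-\hat\beta_{A,\lambda}\in\Ker(A)=\Ker(X)$ by hypothesis, which yields $X\beta=X\hat\beta_{A,\lambda}$, completing the proof. The only delicate point is matching the primal subgradient $\hat s$ with the dual multipliers $\mu^\pm$ so that complementary slackness is verified coordinate by coordinate; this is bookkeeping rather than a real obstacle, and I do not expect any deeper difficulty in the argument.
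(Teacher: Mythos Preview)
Your proof is correct. Both your argument and the paper's rest on Lagrangian/KKT duality, but they are organized in opposite directions. The paper writes the Lagrangian of the constrained program, solves the stationarity condition $(A'A)\underline{\beta}=(A'A)\Xi_A^{-1}(\mu-\gamma)/2$, reparametrizes the multipliers as a vector $\zeta$, and observes that the resulting dual objective in $\zeta$ is precisely the penalized criterion of Definition~\ref{est}; the identity $A\underline{\beta}=A\zeta$ (hence $X\underline{\beta}=X\zeta$ via $\Ker(A)=\Ker(X)$) then falls out of stationarity. You instead start from the subgradient KKT of the penalized problem, read off feasibility of $\hat\beta_{A,\lambda}$ in the constrained program, manufacture multipliers $\mu^\pm=2(\Xi_A\hat\beta_{A,\lambda})^\pm$ to certify optimality, and finish with an explicit strict-convexity midpoint argument for uniqueness of $A\beta$. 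Your route is slightly more self-contained (no appeal to a saddle-point interpretation) and makes the uniqueness step explicit, whereas the paper's route makes the duality between the two formulations more transparent by exhibiting the penalized LASSO as the Lagrange dual itself.
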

Proofs can be found in Section \ref{proofs}, page \pageref{proofs}.

\subsection{Theoretical results}

Let us first introduce our main assumption. This assumption is stated with a given $p\times p$ matrix $M$ and a given real number $x>0$.

\begin{description}
\item[Assumption $H(M,x)$:] there is a constant $c(M)>0$ such that, for any
$\alpha\in\mathds{R}^{p}$ such that
$ \sum_{j:\beta^{*}_{j} = 0}
          \xi_{j}(M)\left|\alpha_{j}\right| \leq x \sum_{j:\beta^{*}_{j} \neq 0}
                    \xi_{j}(M) \left|\alpha_{j}\right| $
we have
\begin{equation}
\label{eq:CondHypEase}
\alpha'M \alpha \geq  c(M) n \sum_{j:\beta_{j}^*\neq 0}\alpha_{j}^{2} .
\end{equation}
\end{description}

First, let us explain briefly the meaning of this hypothesis. In the case, where $M$ is invertible,
the condition
$$\alpha'M \alpha \geq c(M) n \sum_{j:\beta_{j}^*\neq 0} \alpha_{j}^{2} $$
is always satisfied for any $\alpha
\in\mathds{R}^{p}$ with $c(M)$ larger than the smallest eigenvalue of $M/n$. However, for
the LASSO, we have $M=(X'X)$ and $M$ cannot be invertible if $p>n$. Even in this case, Assumption $H(M,x)$ may still be
satisfied. Indeed, the assumption requires that Inequality~\eqref{eq:CondHypEase} holds only for a small for a small subset of $\mathds{R}^{p}$ determined by the condition $ \sum_{j:\beta^{*}_{j} = 0}
          \xi_{j}(M)\left|\alpha_{j}\right| \leq x \sum_{j:\beta^{*}_{j} \neq 0}
                    \xi_{j}(M) \left|\alpha_{j}\right| .$
For $M=(X'X)$, this assumption becomes exactly the one taken in \cite{BTWAggSOI}.
In that paper, the necessity of such an hypothesis is also discussed.

\begin{thm}
\label{mainthm}
Let us assume that Assumption $H(A'A,3)$ is satisfied and that $Ker(A)=Ker(X)$.
Let us choose $0<\eta<1$ and $\lambda = 2 \sigma \sqrt{2n\log\left(p / \eta \right)} $.
With probability at least $1-\eta$ on the draw of $Y$, we have simultaneously
$$
\left\|A\left(\hat{\beta}_{A,\lambda}-\beta^{*}\right)\right\|_{2}^{2}
\leq \frac{72 \sigma^{2}}{c(A'A)}\log\left(\frac{p}{\eta}\right) \sum_{j:\beta^{*}_{j} \neq 0} \xi_{j}(A),
$$
and
$$
\left\|\Xi_{A}\left(\hat{\beta}_{A,\lambda}-\beta^{*}\right)\right\|_{1}
\leq \frac{24 \sqrt{2} \sigma}{c(A'A)} \left(\frac{\log\left(p / \eta\right)}{n}\right)^{\frac{1}{2}}
\sum_{j:\beta^{*}_{j} \neq 0} \xi_{j}(A).
$$
\end{thm}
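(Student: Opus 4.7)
The plan is to reduce the problem to a weighted LASSO on a transformed regression and then apply the classical ``basic inequality + cone condition + restricted eigenvalue'' scheme used for the LASSO in \cite{BTWAggSOI,Lasso3}. The hypothesis $\Ker(A)=\Ker(X)$ means that every row of $A$ lies in the row space of $X$, so that $A(\widetilde{X'X})^{-1}X'X = A$. Writing $Y = X\beta^* + \varepsilon$, this yields the clean linear model $\tilde{Y}_A = A\beta^* + W$ with Gaussian noise $W := A(\widetilde{X'X})^{-1}X'\varepsilon$, and by Remark \ref{estrmk} the estimator $\hat{\beta}_{A,\lambda}$ is exactly the weighted LASSO for $(\tilde{Y}_A,A)$ with weights $\xi_j^{1/2}(A)$.

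First I would control the noise. A direct covariance computation using the symmetry of $(\widetilde{X'X})^{-1}$ and the identity $A(\widetilde{X'X})^{-1}X'X = A$ gives $\mathrm{Cov}(A'W) = \sigma^2 A'A(\widetilde{X'X})^{-1}A'A$, so by definition of $\xi_j(A)$ each coordinate of $\Xi_A^{-1}A'W$ is centered Gaussian with variance exactly $\sigma^2 n$. A Gaussian tail bound and a union bound over $j=1,\dots,p$ (i.e., Lemma \ref{usefullemma}) then show that the event
\begin{equation*}
\mathcal{A} := \bigl\{\|\Xi_A^{-1}A'W\|_\infty \leq \lambda/2\bigr\}
\end{equation*}
has probability at least $1-\eta$ under the proposed choice $\lambda = 2\sigma\sqrt{2n\log(p/\eta)}$. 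All remaining steps are deterministic and performed on $\mathcal{A}$.

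The third step is the basic inequality. Testing the objective of Remark \ref{estrmk} at $\beta^*$ against $\hat{\beta}_{A,\lambda}$, expanding $\|W - A\alpha\|_2^2$ with $\alpha := \hat{\beta}_{A,\lambda} - \beta^*$, and bounding $\langle W, A\alpha\rangle = \langle \Xi_A^{-1}A'W, \Xi_A\alpha\rangle$ by H\"older on $\mathcal{A}$, one obtains
\begin{equation*}
\|A\alpha\|_2^2 \leq \lambda\|\Xi_A\alpha\|_1 + 2\lambda\bigl(\|\Xi_A\beta^*\|_1 - \|\Xi_A\hat{\beta}_{A,\lambda}\|_1\bigr).
\end{equation*}
Splitting the right-hand side on $J:=\{j:\beta_j^*\neq 0\}$ and its complement via the triangle inequality yields both the cone condition $\sum_{j\notin J}\xi_j^{1/2}(A)|\alpha_j| \leq 3\sum_{j\in J}\xi_j^{1/2}(A)|\alpha_j|$, which is exactly what triggers Assumption $H(A'A,3)$ applied to $\alpha$, and the working inequality $\|A\alpha\|_2^2 \leq 3\lambda\sum_{j\in J}\xi_j^{1/2}(A)|\alpha_j|$.

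The endgame is then one Cauchy--Schwarz step and one invocation of the restricted eigenvalue assumption: Cauchy--Schwarz gives $\sum_{j\in J}\xi_j^{1/2}(A)|\alpha_j| \leq \bigl(\sum_{j\in J}\xi_j(A)\bigr)^{1/2}\bigl(\sum_{j\in J}\alpha_j^2\bigr)^{1/2}$, and Assumption $H(A'A,3)$ bounds $\sum_{j\in J}\alpha_j^2 \leq \|A\alpha\|_2^2/(c(A'A)n)$. Plugging these into the working inequality produces a quadratic in $\|A\alpha\|_2$ whose solution, after substituting $\lambda^2 = 8\sigma^2 n\log(p/\eta)$, gives the first conclusion. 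The second conclusion is then immediate: the cone condition yields $\|\Xi_A\alpha\|_1 \leq 4\sum_{j\in J}\xi_j^{1/2}(A)|\alpha_j|$, and one further Cauchy--Schwarz combined with the $\ell_2$ bound just derived closes the argument. The main obstacle, in my view, is not any single step but the careful bookkeeping of the weights $\xi_j(A)$ vs.\ $\xi_j^{1/2}(A)$ that appear in the penalty, in Assumption $H$, and in the conclusions; the final Cauchy--Schwarz step is precisely what reconciles the square-root weights inherited from the penalty with the linear weights appearing on the right-hand sides of the theorem.
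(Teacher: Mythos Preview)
Your proposal is correct and follows essentially the same route as the paper's proof: both start from the minimizer inequality (you via the reformulation of Remark~\ref{estrmk}, the paper via Definition~\ref{est} directly), both control the noise through Lemma~\ref{usefullemma} to land on the event $\{\|\Xi_A^{-1}A'W\|_\infty\le\lambda/2\}$, both extract the cone condition $\sum_{j\notin J}\xi_j^{1/2}|\alpha_j|\le 3\sum_{j\in J}\xi_j^{1/2}|\alpha_j|$ and the working bound $\|A\alpha\|_2^2\le 3\lambda\sum_{j\in J}\xi_j^{1/2}|\alpha_j|$, and both finish with Cauchy--Schwarz plus Assumption~$H(A'A,3)$. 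Your packaging through the transformed model $\tilde Y_A=A\beta^*+W$ is a bit tidier, but the substance and the constants are identical.
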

\noindent In particular, the first inequality gives
\begin{itemize}
\item if Assumption $H(X'X,3)$ is satisfied, with probability at least $1-\eta$,
$$
 \frac{1}{n}\left\|X\left(\hat{\beta}_{X,\lambda}-\beta^{*}\right)\right\|_{2}^{2}
      \leq \frac{72 \sigma^{2}}{n c(X'X)}
            \|\beta^*\|_{0} \log\left(\frac{p}{\eta} \right);
$$
\item if Assumption $H(\frac{n}{m}Z'Z,3)$ is satisfied, and if $Ker(Z) = Ker(X)$, with probability at least $1-\eta$,
$$
\frac{1}{m}\left\|Z\left(\hat{\beta}_{Z,\lambda}-\beta^{*}\right)\right\|_{2}^{2}
      \leq \frac{72 \sigma^{2}}{n c(\frac{n}{m} Z'Z)}
            \sum_{j:\beta_{j}^{*} \neq 0} \xi_{j}\left(\sqrt{n/m}Z\right)  \log\left(\frac{p}{\eta}\right) ;
$$
\item and if $(X'X)$ is invertible, with probability at least $1-\eta$,
$$
\left\|\hat{\beta}_{\sqrt{n}I,\lambda}-\beta^{*} \right\|_{2}^{2}
      \leq \frac{72 \sigma^{2}}{n c(nI)}
            \sum_{j:\beta_{j}^{*} \neq 0} \xi_{j}(nI)  \log \left(\frac{p}{\eta}\right)
.
$$
\end{itemize}

This result shows that each of these three estimators satisfy at least
a SI for the task it is designed for. For example, the LASSO is proved to have "good" performance
for the estimation of $X\beta^{*}$ and the Transductive LASSO is proved to have good performance for
the estimation of $Z\beta^{*}$. However we cannot assert that, for example, the LASSO performs
better than the Transductive LASSO for the estimation of $Z\beta^{*}$.

\begin{rmk}
For $A=X$, the particular case of our result applied to the LASSO is quite similar to the result
given in \cite{BTWAggSOI} on the LASSO. Actually, Theorem \ref{mainthm} can be seen as a generalization
of the result in \cite{BTWAggSOI} and it should be noted that the proof used to prove Theorem
\ref{mainthm} uses arguments introduced in \cite{BTWAggSOI}.
\end{rmk}

\begin{rmk}
As soon as $A'A$ is better determined than $X'X$, Assumption $H(A,x)$ is less restrictive
than $H(X'X,x)$. In particular,
in the case where $m>n$, Assumption $H((n/m)Z'Z,x)$ is expected to be
less restrictive than Assumption $H(X'X,x)$.
\end{rmk}

Now we give the analogous result for the 
estimator $\tilde{\beta}_{A,\lambda}$.

\begin{thm}
\label{mainthm2}
Let us assume that Assumption $H(A'A,1)$ is satisfied and that $Ker(A)=Ker(X)$.
Let us choose $0<\eta<1$ and $\lambda = 2 \sigma \sqrt{2n\log\left(p / \eta \right)} $.
With probability at least $1-\eta$ on the draw of $Y$, we have simultaneously
$$
\left\|A\left(\tilde{\beta}_{A,\lambda}-\beta^{*}\right)\right\|_{2}^{2}
\leq \frac{72 \sigma^{2}}{c(A'A)}\log\left(\frac{p}{\eta}\right) \sum_{j:\beta^{*}_{j} \neq 0} \xi_{j}(A),
$$
and
$$
\left\|\Xi_{A}\left( \tilde{\beta}_{A,\lambda}-\beta^{*}\right)\right\|_{1}
\leq \frac{ 12\sqrt{2} \sigma}{c(A'A)} \left(\frac{\log\left(p / \eta \right)}{n}\right)^{\frac{1}{2}}
\sum_{j:\beta^{*}_{j} \neq 0} \xi_{j}(A).
$$
\end{thm}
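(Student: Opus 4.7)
The plan is to mirror the proof of Theorem \ref{mainthm} but adapted to the constraint-based (Dantzig-style) estimator $\tilde{\beta}_{A,\lambda}$. The three ingredients will be (i) a high-probability feasibility of $\beta^*$, (ii) a cone condition on $h=\tilde{\beta}_{A,\lambda}-\beta^*$ coming from the optimality of $\tilde{\beta}_{A,\lambda}$, and (iii) a restricted-eigenvalue argument via Assumption $H(A'A,1)$. The tighter value $x=1$ (instead of $x=3$ for the LASSO) is natural here since the Dantzig objective $\|\beta\|_1$ gives a clean inequality $\|\tilde{\beta}_{A,\lambda}\|_1\leq\|\beta^*\|_1$, with no slack from a regularization term.

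First I would fix $\varepsilon=Y-X\beta^*$ and check that with the chosen $\lambda$, $\beta^*$ lies in the feasible set with probability at least $1-\eta$. Because $(\widetilde{X'X})^{-1}X'Y-\beta^*=(\widetilde{X'X})^{-1}X'\varepsilon$ on $\mathrm{Im}(X')$ (which contains the argument since $\Ker(A)=\Ker(X)$), each coordinate of $\Xi_A^{-1}A'A(\widetilde{X'X})^{-1}X'\varepsilon$ is centered Gaussian with variance $\sigma^{2}n$, by the very definition of $\xi_{j}(A)$. A union bound and the Gaussian tail (exactly the content of Lemma \ref{usefullemma} used already for Theorem \ref{mainthm}) then give $\|\Xi_A^{-1}A'A(\widetilde{X'X})^{-1}X'\varepsilon\|_\infty\leq\lambda$ with probability at least $1-\eta$, proving feasibility. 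On this event, optimality yields $\|\tilde{\beta}_{A,\lambda}\|_1\leq\|\beta^*\|_1$; the standard split over $J=\{j:\beta^*_j\neq 0\}$ and $J^c$ gives $\|h_{J^c}\|_1\leq\|h_J\|_1$, and a triangle inequality between the two feasibility constraints gives the key Dantzig-type control
\[
\|\Xi_A^{-1}A'Ah\|_\infty \leq 2\lambda.
\]

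The concluding step is to convert these into the two stated bounds. Expanding the quadratic form,
\[
\|Ah\|_2^{2}=h'(A'A)h=\sum_j h_j[A'Ah]_j\leq 2\lambda\sum_j\xi_j^{1/2}(A)|h_j|=2\lambda\,\|\Xi_Ah\|_1,
\]
so it remains to bound $\|\Xi_Ah\|_1$ by something of the form $(\sum_{j\in J}\xi_j(A))^{1/2}\|Ah\|_2/\sqrt{nc(A'A)}$. This is where Assumption $H(A'A,1)$ enters: the cone condition above (in the appropriately weighted form forced by the $\Xi_A$'s) places $h$ in the set where $h'A'Ah\geq c(A'A)\,n\sum_{j\in J}h_j^{2}$, and a Cauchy--Schwarz argument on $J$ bounds $\|\Xi_Ah\|_1$ by $2\sum_{j\in J}\xi_j^{1/2}(A)|h_j|\leq 2(\sum_{j\in J}\xi_j(A))^{1/2}(\sum_{j\in J}h_j^{2})^{1/2}$. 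Substituting back into the display and solving the resulting quadratic inequality for $\|Ah\|_2$ produces the first bound; plugging that estimate into $\|\Xi_Ah\|_1\leq 2(\sum_{j\in J}\xi_j(A))^{1/2}\|Ah\|_2/\sqrt{nc(A'A)}$ yields the second. I expect the main obstacle to be the careful bookkeeping that turns the unweighted cone coming from $\|\tilde{\beta}_{A,\lambda}\|_1\leq\|\beta^*\|_1$ into the $\xi_j(A)$-weighted cone required by Assumption $H(A'A,1)$, and, as in Theorem \ref{mainthm}, the optimisation of numerical constants to reach $72\sigma^{2}$ and $12\sqrt{2}\,\sigma$.
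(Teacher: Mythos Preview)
Your outline matches the paper's proof almost step for step: H\"older to bound $\|Ah\|_2^2$ by $\|\Xi_A h\|_1$ times the sup-norm of $\Xi_A^{-1}A'Ah$, a triangle inequality on that sup-norm using the constraint on $\tilde\beta_{A,\lambda}$ and Lemma~\ref{usefullemma} on $\beta^*$, the cone condition from $\|\tilde\beta_{A,\lambda}\|_1\le\|\beta^*\|_1$, Cauchy--Schwarz on $J$, and Assumption $H(A'A,1)$ to close.

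Two remarks. First, to hit the stated constants you need the sharper triangle-inequality bound: Lemma~\ref{usefullemma} gives $\|\Xi_A^{-1}A'A((\widetilde{X'X})^{-1}X'Y-\beta^*)\|_\infty\le\lambda/2$ (not merely $\le\lambda$), so the combined bound is $3\lambda/2$ rather than your $2\lambda$; with $3\lambda/2$ the arithmetic lands exactly on $72\sigma^2$ and $12\sqrt{2}\,\sigma$.

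Second, the obstacle you flag is genuine and you have located it precisely. The paper's proof writes ``by definition, $0\le\|\Xi_A\beta^*\|_1-\|\Xi_A\tilde\beta_{A,\lambda}\|_1$'' and derives the $\xi_j^{1/2}$-weighted cone directly from that. But Definition~\ref{est2} minimizes the \emph{unweighted} $\|\beta\|_1$, which only yields $\|h_{J^c}\|_1\le\|h_J\|_1$; the weighted inequality does not follow without further hypotheses on the $\xi_j(A)$. So the paper's argument implicitly treats the Dantzig objective as $\|\Xi_A\beta\|_1$ (or, equivalently, assumes feasibility and optimality in the weighted norm). Your instinct that this step needs care is correct; there is no extra ``bookkeeping'' trick in the paper that resolves it beyond that assertion.
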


\section{An extension to the general case}
\label{hard}

In this section, we only deal with the transductive setting, $A=\sqrt{n/m}Z$.
Let us remind that in such a framework, we observe $X$ which consists of some observations $x_{i}$ associated
to labels $Y_{i}$ in $Y$, for $i\in\{1,\ldots,n\}$. Moreover we have
additional observations $x_{i}$ for $i\in\{n+1,\ldots,m\}$ with $m>n$. We also recall that $Z$ contains all the $x_{i}$ for $i\in\{1,\ldots,m\}$ and that the objective is to estimate the corresponding labels $Y_{i}$, let us put $\tilde{Y}=(Y_{1},\ldots,Y_{m})'$.

\subsection{General remarks}

Let us have look at the definition of $\hat{\beta}_{\sqrt{n/m}Z,\lambda}$, for example as given in
Remark~\ref{estrmk}:
$$
\hat{\beta}_{\sqrt{\frac{n}{m}}Z,\lambda} \in\arg\min_{\beta \in \mathds{R}^{p}}
\Bigl\{  \frac{n}{m}\left\|\tilde{Y}_{Z}-Z\beta\right\|_{2}^{2}
+ 2\lambda \|\Xi_{\frac{n}{m}Z'Z} \beta\|_{1} \Bigr\},
$$
where actually $\tilde{Y}_{Z} = Z\left(\widetilde{X'X}\right)^{-1}XY$ can be interpreted as
a preliminary estimator of $\tilde{Y}$. Hence, in any case, we propose the following procedure.\\
\noindent Let us assume that, depending on the
context, the user has a natural (and not necessary efficient) estimator of $\tilde{Y}=(Y_{1},\ldots,Y_{n+m})'$. Note this estimator $\check{Y}$.

\begin{dfn}
\label{estG1}
The Transductive LASSO is given by:
$$
\hat{\beta}_{\check{Y},\sqrt{\frac{n}{m}}Z,\lambda} \in\arg\min_{\beta \in \mathds{R}^{p}}
\Bigl\{  \frac{n}{m}\left\|\check{Y}-Z\beta\right\|_{2}^{2}
+ 2\lambda \|\Xi_{\frac{n}{m}Z'Z} \beta\|_{1} \Bigr\},
$$
and the Transductive Dantzig Selector is defined as:
\begin{equation*}
\tilde{\beta}_{\check{Y},\sqrt{\frac{n}{m}}Z,\lambda}
=\left\{
\begin{array}{l}
\arg\min_{\beta \in \mathds{R}^{p}} \left\|\beta\right\|_{1}
\\
\\
s. t. \left\|\frac{n}{m}\Xi_{\sqrt{n/m}Z}^{-1} Z'(\check{Y}-Z\beta)\right\|_{\infty} \leq \lambda.
\end{array}
\right.
\end{equation*}
\end{dfn}

In the next subsection, we propose a context where we have a natural estimator $\check{Y}$ and give a
SI on this estimator.

\subsection{An example: small labeled dataset, large unlabeled dataset}

The idea of this example is to consider the case where the examples $x_{i}$ for
$1\leq i \leq n$ are "representative" of the large populations $x_{i}$ for $1\leq i \leq m$.

Consider, $Z=(x_{1}',\ldots,x_{m}')'$ where the $x_{i}'s$ are the points of interest: we want
to estimate $\tilde{Y}=Z\beta^{*}$. However, we just have a very expensive and noisy procedure,
that, given a point $x_{i}$, returns $Y_{i}=x_{i} \beta^{*} + \varepsilon_{i}$, where the
$\varepsilon_{i}$'s are $\mathcal{N}(0,\sigma^{2})$ independent random variables. In such a case, the procedure cannot be applied for the whole dataset $Z=(x_{1}',\ldots,x_{m}')'$.
We can only make a deal with a "representative" sample of size $n$. A typical case could
be $n<p<m$.

First, let us introduce a slight modification of our main hypothesis. It is also stated with a given $p\times p$ matrix $M$ and a given real number $x>0$.

\begin{description}
\item[Assumption $H'(M,x)$:] there is a $c(M)>0$ such that, for any
$\alpha\in\mathds{R}^{p}$ such that
$ \sum_{j:\beta^{*}_{j} = 0}
          \left|\alpha_{j}\right| \leq x \sum_{j:\beta^{*}_{j} \neq 0}
                     \left|\alpha_{j}\right| $
we have
$$\alpha'M \alpha \geq c(M) n \sum_{j:\beta^*_j \neq 0} \alpha_{j}^2.$$
\end{description}

We can now state our main result.

\begin{thm}
\label{mainthm3}
Let us assume that Assumption $H'((n/m)Z'Z,1)$ is satisfied.
Let us choose $0<\eta<1$ and $\lambda_{1} = \lambda_{2} = 10^{-1} \sigma \sqrt{2n\log\left(p / \eta \right)} $.
Moreover, let us assume that
\begin{equation}
\label{hypoinf}
\forall u \in\mathds{R}^{p} \text{ with } \|u\|_{1} \leq  \|\beta^{*}\|_{1},\quad
\left\|\left((X'X)-\frac{n}{m}(Z'Z)\right) u \right\|_{\infty} < \frac{\sigma}{10}  \sqrt{2n\log\left(\frac{p}{\eta}\right)}  .
\end{equation}
Let $ \check{Y}_{\lambda_{1}} = Z \tilde{\beta}_{X,\lambda_{1}} $
be a preliminary estimator of $\tilde{Y}$, based on ths Dantzig Selector given by~\eqref{eq:DantzigGenerlll} (with $A=X$). Then define the Transductive LASSO by
\begin{equation*}
\hat{\beta}^{*}_{\frac{n}{m}Z, 20 \lambda_{2}}
=\left\{
\begin{array}{l}
\arg\min_{\beta \in \mathds{R}^{p}} \frac{n}{m} \left\|Z\beta\right\|_{2}^{2}
\\
\\
s. t. \left\|\frac{n}{m} Z'(\check{Y}_{\lambda_{1}} - Z\beta) \right\|_{\infty} \leq 20 \lambda_{2},
\end{array}
\right.
\end{equation*}
and the Transductive Dantzig Selector
\begin{equation*}
\tilde{\beta}^{*}_{\frac{n}{m}Z,\lambda_{2}}
=\left\{
\begin{array}{l}
\arg\min_{\beta \in \mathds{R}^{p}} \left\|\beta\right\|_{1}
\\
\\
s. t. \left\|\frac{n}{m} Z'(\check{Y}_{\lambda_{1}}-Z\beta) \right\|_{\infty} \leq \lambda_{2} .
\end{array}
\right.
\end{equation*}
With probability at least $1-\eta$ on the draw of $Y$, we have simultaneously
$$
\frac{1}{m}\left\|Z(\tilde{\beta}^{*}_{\frac{n}{m}Z,\lambda_{2}}-\beta^*)\right\|_{2}^{2}
\leq \frac{16 \sigma^{2}}{n c((n/m)Z'Z)}\log\left(\frac{p}{\eta}\right) \|\beta^*\|_{0},
$$
$$
\left\|\tilde{\beta}^{*}_{\frac{n}{m}Z,\lambda_{2}}-\beta^*\right\|_{1}
\leq \frac{ 8 \sigma}{c((n/m)Z'Z)} \left(\frac{\log\left( p / \eta\right)}{n}\right)^{\frac{1}{2}}
\|\beta^*\|_{0},
$$
and moreover, if $H'((n/m)Z'Z,5)$ is also satisfied,
$$
\frac{1}{m}\left\|Z(\hat{\beta}^{*}_{\frac{n}{m}Z, 20 \lambda_{2}}-\beta^*)\right\|_{2}^{2}
\leq \frac{88 \sigma^{2}}{n c((n/m)Z'Z)}\log\left(\frac{p}{\eta}\right) \|\beta^*\|_{0},
$$
$$
\left\|\hat{\beta}^{*}_{\frac{n}{m}Z, 20 \lambda_{2}}-\beta^*\right\|_{1}
\leq \frac{ 54 \sigma}{c((n/m)Z'Z)} \left(\frac{\log\left(p / \eta \right)}{n}\right)^{\frac{1}{2}}
\|\beta^*\|_{0}.
$$
\end{thm}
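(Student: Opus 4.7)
The proof proceeds in two stages: a deterministic reduction on a high-probability event which shows that $\beta^*$ is feasible for the transductive problems, followed by the standard sparsity-inequality argument of the proofs of Theorems \ref{mainthm} and \ref{mainthm2} applied with $A=\sqrt{n/m}Z$.

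\textbf{Stage 1 (deterministic reduction).} Let $\Omega$ be the event (of probability at least $1-\eta$, via Lemma \ref{usefullemma} applied to the Gaussian vector $X'\varepsilon$) on which $\|X'\varepsilon\|_\infty$ is controlled by a constant multiple of $\sigma\sqrt{2n\log(p/\eta)}$; from now on everything is deterministic on $\Omega$. First I check that $\beta^*$ is feasible for the preliminary Dantzig problem defining $\tilde{\beta}_{X,\lambda_1}$ in \eqref{eq:DantzigGenerlll}; $\ell_1$-optimality of $\tilde{\beta}_{X,\lambda_1}$ then gives $\|\tilde{\beta}_{X,\lambda_1}\|_1 \leq \|\beta^*\|_1$, hence $\|\tilde{\beta}_{X,\lambda_1}-\beta^*\|_1 \leq 2\|\beta^*\|_1$, which is the only property of the preliminary estimator the argument will use. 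Next, using $\check{Y}_{\lambda_1} - Z\beta^* = Z(\tilde{\beta}_{X,\lambda_1}-\beta^*)$, I write the key decomposition
$$
\tfrac{n}{m}Z'(\check{Y}_{\lambda_1}-Z\beta^*) = X'X(\tilde{\beta}_{X,\lambda_1}-\beta^*) + \bigl[\tfrac{n}{m}Z'Z-X'X\bigr](\tilde{\beta}_{X,\lambda_1}-\beta^*).
$$
Substituting $X\beta^* = Y-\varepsilon$, the first summand equals $-X'(Y-X\tilde{\beta}_{X,\lambda_1}) + X'\varepsilon$, whose sup-norm is bounded by $\lambda_1$ (Dantzig constraint) plus the event-bound on $\|X'\varepsilon\|_\infty$; the second summand is controlled by \eqref{hypoinf}, applied by linearity to $u = \tilde{\beta}_{X,\lambda_1}-\beta^*$ using the $\ell_1$ bound above. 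Summing the two pieces shows that $\beta^*$ is feasible for the Transductive Dantzig Selector at level $\lambda_2$ and, with room to spare, for the Transductive LASSO at level $20\lambda_2$.

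\textbf{Stage 2 (sparsity inequalities).} Once $\beta^*$ is feasible for each transductive program, the proofs of Theorems \ref{mainthm} and \ref{mainthm2} transfer with only cosmetic changes. For $\tilde{\beta}^*_{\frac{n}{m}Z,\lambda_2}$, $\ell_1$-optimality combined with feasibility of $\beta^*$ produces the cone inclusion needed to invoke Assumption $H'((n/m)Z'Z,1)$; the latter then lower-bounds the quadratic form $(\tilde{\beta}^*-\beta^*)'\tfrac{n}{m}Z'Z(\tilde{\beta}^*-\beta^*)$ and, combined with the $(2\lambda_2)$-bound on $\tfrac{n}{m}Z'Z(\tilde{\beta}^*-\beta^*)$ obtained from both constraints, delivers the prediction and $\ell_1$ inequalities with constants $16$ and $8$. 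For $\hat{\beta}^*_{\frac{n}{m}Z,20\lambda_2}$, Proposition \ref{dualLASSO} recasts the estimator as a Dantzig-type problem at the enlarged level $20\lambda_2$, so the same machinery applies; the enlargement of the feasibility slack inflates the associated cone, which is why Assumption $H'((n/m)Z'Z,5)$ is required and why the final constants degrade to $88$ and $54$.

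\textbf{Main obstacle.} The technical heart of the proof is the calibration in Stage 1: three independent error contributions -- the Gaussian deviation $\|X'\varepsilon\|_\infty$, the Dantzig slack $\lambda_1$, and the perturbation bound from \eqref{hypoinf} on $u=\tilde{\beta}_{X,\lambda_1}-\beta^*$ -- all feed additively into the effective constraint level faced by the transductive step, and each must be shown to fit within the stipulated budgets $\lambda_2$ and $20\lambda_2$. The numerical choices $\lambda_1 = \lambda_2 = 10^{-1}\sigma\sqrt{2n\log(p/\eta)}$, the factor $20$ in the LASSO constraint, and the cone parameter $5$ in the second invocation of $H'$ are tuned precisely so that this bookkeeping closes; verifying this alignment is the only step where the argument departs substantively from the proofs of Theorems \ref{mainthm} and \ref{mainthm2}.
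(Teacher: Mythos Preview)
Your overall strategy mirrors the paper's: the same decomposition of $\tfrac{n}{m}Z'(\check{Y}_{\lambda_1}-Z\beta^*)$ into the preliminary Dantzig constraint, the Gaussian deviation $X'\varepsilon$, and the matrix-perturbation term from \eqref{hypoinf}, followed by the cone inclusion and Assumption $H'$ exactly as in Theorems \ref{mainthm}--\ref{mainthm2}. However, your Stage~1 calibration is off. With $\lambda_1=10^{-1}\sigma\sqrt{2n\log(p/\eta)}$, Lemma \ref{usefullemma} only yields $\|X'\varepsilon\|_\infty\le 10\lambda_1$, so $\beta^*$ is \emph{not} feasible for the preliminary Dantzig at level $\lambda_1$, and summing the three contributions gives $10\lambda_1+\lambda_1+2\lambda_1=13\lambda_1$, not $\lambda_2$. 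Thus $\beta^*$ is not feasible for the Transductive Dantzig at level $\lambda_2$ either, and your ``$(2\lambda_2)$-bound'' in Stage~2 is too small by a factor of $7$; with a $2\lambda_2$ bound the resulting constants would be about $0.32$ and $0.56$, not $16$ and $8$. The paper does not claim feasibility at $\lambda_2$: it bounds the four pieces directly to obtain $14\lambda_1$, and it is $(28\lambda_1)^2/n = 15.68\,\sigma^2\log(p/\eta)$ that gives the constant $16$. (The paper does still invoke $\|\tilde\beta^*\|_1\le\|\beta^*\|_1$ and $\|\tilde\beta_{X,\lambda_1}\|_1\le\|\beta^*\|_1$, which formally require the feasibility you assert; so this is a place where both arguments are somewhat loose with the constants.)

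For the Transductive LASSO your description is also slightly off. Proposition \ref{dualLASSO} does not recast $\hat\beta^*$ as a Dantzig-type ($\ell_1$-minimization) problem; it identifies the constrained $\|Z\beta\|_2^2$-minimization with the \emph{primal penalized} form, and the paper then compares the primal objective at $\hat\beta^*$ and at $\beta^*$ (no feasibility issue here), producing a cross term $\tfrac{n}{m}(\hat\beta^*-\beta^*)'Z'Z(\tilde\beta_{X,\lambda_1}-\beta^*)$. That cross term is bounded by $13\lambda_1\|\hat\beta^*-\beta^*\|_1$ via the same decomposition, and the combination with the $40\lambda_2$ penalty is what forces the cone ratio below $5$ and yields the constants $88$ and $54$ along the lines of Theorem \ref{mainthm}.
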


First, let us remark that the preliminary estimator $\check{Y}_{\lambda_{1}}$ is defined using the Dantzig Selector
$\tilde{\beta}_{X,\lambda_{1}}$.
We could give exactly the same kind of results using a the LASSO
$\hat{\beta}_{X,\lambda_{1}}$ as a preliminary estimator.

Now, let us give a look at the new hypothesis, Inequality \eqref{hypoinf}. We can interpret this
condition as the fact that the
$x_{i}$'s for $1\leq i \leq n$ are effectively representative of the wide population:
so $X'X/n$ is "not too far" from $Z'Z/m$.
We will end this section by a result that proves that this is effectively the case
in a typical situation.

\begin{prop}
\label{propmatrix}
Assume that $m=kn$ for an integer value $k\in\mathds{N}\setminus\{0,1\}$.
Let us assume that $X$ and $Z$ are build in the following way: we have a population
$\chi_{1}=(\chi_{1,1},\ldots,\chi_{1,p})\in\mathds{R}^{p}$,\ldots, $\chi_{m}\in\mathds{R}^{p}$ (the points of interest).
Then, we draw uniformly without replacement, $n$ of the $\chi_{i}$'s to be put in $X$: more
formally, but equivalently, we draw uniformly a permutation $\sigma$ of $\{1,\ldots,m\}$ and
we put $X=(x_{1}',\ldots,x_{n}')'=(\chi_{\sigma(1)}',\ldots,\chi_{\sigma(n)}')'$
and $Z=(x_{1}',\ldots,x_{m}')'=(\chi_{\sigma(1)}',\ldots,\chi_{\sigma(m)}')'$.\\
\noindent Let us assume that for any $(i,j)\in\{1,\ldots,m\}\times\{1,\ldots,p\}$, $\chi_{i,j}^2<\kappa$ for some $\kappa>0$,
and that $p\geq 2$.
Then, with probability at least $1-\eta$, for any $u\in\mathds{R}^{p}$,
$$ \left\|\left(X'X -\frac{n}{m} Z'Z\right)u \right\|_{\infty}
  \leq \left\|u \right\|_{1} \frac{2 \kappa k}{k-1} \sqrt{2 \log\frac{p}{\eta}} .$$
In particular, if we have
$$ \|u\|_{1} \leq \|\beta^*\|_{1} \text{ and } \kappa \leq \frac{k-1}{10 \, k} \frac{\sigma}{\|\beta^*\|_{1}} $$
then we have
$$ \left\|\left(X'X -\frac{n}{m} Z'Z\right)u\right\|_{\infty} \leq
\sigma \sqrt{2n\log\left(\frac{p}{\eta}\right)}
. $$
\end{prop}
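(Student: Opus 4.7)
The plan is to exploit that $Z'Z = \sum_{i=1}^m \chi_i'\chi_i$ does not depend on the random permutation $\sigma$ (permuting the summands leaves the sum unchanged), whereas $X'X = \sum_{i=1}^n \chi_{\sigma(i)}'\chi_{\sigma(i)}$ is a random sum of $n$ rows drawn uniformly without replacement from the population $\{\chi_1,\dots,\chi_m\}$. For each entry $(j,k)$, write
$$
M_{j,k} := (X'X)_{j,k} - \frac{n}{m}(Z'Z)_{j,k} = \sum_{i=1}^n v_i^{(j,k)} - \frac{n}{m}\sum_{i=1}^m v_i^{(j,k)},
\qquad v_i^{(j,k)} := \chi_{i,j}\chi_{i,k}.
$$
Since $E[v_{\sigma(i)}^{(j,k)}] = \frac{1}{m}\sum_i v_i^{(j,k)}$, one has $E[M_{j,k}] = 0$, and by Cauchy--Schwarz combined with $\chi_{i,j}^2<\kappa$ one gets $|v_i^{(j,k)}| \leq \kappa$.

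The second step is concentration. Applying a Hoeffding-type inequality for sums of bounded variables sampled without replacement (the sharper Serfling version carries a finite-population correction factor $(1-(n-1)/m) \approx (k-1)/k$ in the exponent, which is precisely what produces the $k/(k-1)$ coefficient in the stated bound) yields
$$
P\!\left(|M_{j,k}| \geq t\right) \;\leq\; 2\exp\!\left(-\frac{(k-1)\,t^2}{2k\,n\kappa^2}\right).
$$
I would then union-bound over the $p^2$ entries (or, using symmetry, the $p(p+1)/2$ distinct pairs), and solve for $t$ so that the total probability is at most $\eta$. The assumption $p \geq 2$ is used to simplify the logarithmic constant arising from the union bound ($\log(p^2/\eta)$ type terms) into $\log(p/\eta)$, producing
$$
\max_{j,k}|M_{j,k}| \;\leq\; \frac{2\kappa k}{k-1}\sqrt{2\log(p/\eta)}
$$
with probability at least $1-\eta$.

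The third step is to upgrade this entrywise bound to the claimed operator-type statement by H\"older's inequality: for any $u\in\mathds{R}^{p}$,
$$
\left\|\left(X'X - \tfrac{n}{m}Z'Z\right)u\right\|_\infty
= \max_j\Big|\sum_k M_{j,k}u_k\Big|
\leq \Big(\max_{j,k}|M_{j,k}|\Big)\|u\|_1,
$$
which combined with the previous display is the first claim. For the second claim, I would simply substitute the two hypotheses $\|u\|_1 \leq \|\beta^*\|_1$ and $\kappa \leq \frac{k-1}{10k}\sigma/\|\beta^*\|_1$ into the first bound: the product $\|u\|_1\cdot \frac{2\kappa k}{k-1}$ collapses to $\sigma/5$, and since $\sigma/5 \cdot \sqrt{2\log(p/\eta)} \leq \sigma\sqrt{2n\log(p/\eta)}$ trivially, the stated inequality follows.

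The main obstacle is not conceptual but arithmetic: getting the exact constant $2k/(k-1)$ requires using the Serfling rather than the vanilla Hoeffding inequality (otherwise one obtains a constant independent of $k$), and collapsing the union-bound logarithm $\log(2p^2/\eta)$ into the cleaner $\log(p/\eta)$ relies on the hypothesis $p\geq 2$. Everything else is routine once the permutation structure is identified.
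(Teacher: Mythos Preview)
Your approach is essentially the same as the paper's: reduce to an entrywise bound via H\"older, apply a Hoeffding-type concentration inequality for sampling without replacement (the paper invokes Catoni's ``exchangeable-distribution inequality'' rather than Serfling by name, but the role is identical), union-bound over the $p^{2}$ entries, and use $p\geq 2$ to absorb $\log(2p^{2}/\eta)$ into $\log(p/\eta)$. One minor correction: since $2k/(k-1)\geq 2$, even the plain Hoeffding bound (without the finite-population correction) already delivers the stated constant, so Serfling is not in fact needed---the factor $k/(k-1)$ in the statement is looser, not sharper, than what the correction would give.
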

Let us just mention that the assumption $m=kn$ is not restrictive. It has been introduced for the sake of simplicity.

\section{Experimental results}

\label{simu}

\noindent {\bf Implementation.} Since the paper of Tibshirani \cite{Tibshirani-LASSO},
several effective algorithms to compute the LASSO have been proposed
and studied (for instance Interior Points methods \cite{interior}, LARS \cite{Efron-LARS}, Pathwise
Coordinate Optimization \cite{PCO}, Relaxed Greedy Algorithms \cite{Barron2}).
For the Dantzig Selector, a linear method was proposed in the first paper \cite{Dantzig}.
The LARS algorithm was also successfully extended in \cite{DASSO} to compute
the Dantzig Selector.\\
\noindent Then there are many algorithms to compute $\hat{\beta}_{A,\lambda}$ and $\tilde{\beta}_{A,\lambda}$, when $A=X$. Thanks to Proposition
\ref{propLSE}, it is also clear that we can easily find an efficient algorithm for the case $A=\sqrt{n}I$.\\
\noindent The general form of the estimators $\hat{\beta}_{A,\lambda}$ and $\tilde{\beta}_{A,\lambda}$ given by
Definitions~\ref{est} and~\ref{est2}, allows to use one of the algorithms mentioned previously
to compute our estimator in two cases.
For example, from Remark~\ref{est}, we have:
$$
\hat{\beta}_{A,\lambda} \in\arg\min_{\beta \in \mathds{R}^{p}}
\Bigl\{ \left\|\tilde{Y}_{A}-A\beta\right\|_{2}^{2} + 2\lambda \|\Xi_{A} \beta\|_{1} \Bigr\} ,
$$
then we just have to compute $\tilde{Y}_{A}$, to put $B = A\Xi_{A}^{-1}$, to use any program
that computes the LASSO to determine
$$ \hat{\gamma} \in \arg\min_{\gamma \in \mathds{R}^{p}}
\Bigl\{ \left\|\tilde{Y}_{A}-B\gamma\right\|_{2}^{2} + 2\lambda \|\gamma\|_{1} \Bigr\} $$
and then to put $\hat{\beta}_{A,\lambda} = \Xi_{A}^{-1}\gamma$.

In the rest of this section, we compare the LASSO and the transductive LASSO on the classical toy
example introduced by Tibshirani \cite{Tibshirani-LASSO} and used as a benchmark.\\

\noindent{\bf Data description.} In the model proposed by Tibshirani, we have
$$ Y_{i} = x_i \beta^* + \varepsilon_i $$
for $i\in\{1,\ldots,n\}$, $\beta^* \in\mathds{R}^{p}$ and the $\varepsilon_{i}$ are i.i.d. $\mathcal{N}(0,\sigma^2)$.
Finally, the $(x_{i})_{i\in\{1,\ldots,m\}}$ are generated from a probability distribution: they are independent and
identically distributed
$$
x_{i}
  \sim \mathcal{N} \left(
\left(\begin{array}{c} 0 \\ \vdots \\ 0 \end{array}\right)
,
\left(\begin{array}{c c c c c} 1 & \rho & \dots & \dots & \rho^{p-1}
              \\            \rho & 1 & \rho & \dots & \rho^{p-2}
              \\            \vdots & \ddots & \ddots & \ddots & \vdots
              \\           \rho^{p-2} & \dots & \rho & 1 & \rho
               \\           \rho^{p-1} & \dots & \dots & \rho & 1 \end{array}\right)
\right),
$$
for a given $\rho\in]-1,1[$.

As in \cite{Tibshirani-LASSO}, we set $p=8$. In a first experiment, we take
$(n,m)=(7,10)$, $\rho=0.5$, $\sigma=1$ and $\beta^*=(3,1.5,0,0,2,0,0,0)$ ("sparse").
Then, in order to check the robustness of the results, we consider
successively $\rho=0.5$ by $\rho=0.9$ (correlated variables),
$\sigma=1$ by $\sigma=3$ (noisy case), $\beta^*=(3,1.5,0,0,2,0,0,0)$ by $\beta^*=(5,0,0,0,0,0,0,0)$ ("very
sparse" case), $(n,m)=(7,10)$ by $(n,m)=(7,20)$ (larger unlabeled set), $(n,m)=(20,30)$ ($p<n$, easy case) and finally $(n,m)=(20,120)$.

We use the version of the Transductive LASSO proposed in Section \ref{hard}:
for a given $\lambda_{1}$, we first compute the LASSO estimator $\hat{\beta}_{X,\lambda_{1}}$.
In the sequel, the Transductive LASSO is given by
\begin{equation*}
\hat{\beta}^{TL}(\lambda_{1},\lambda_{2})= \left\{
\begin{array}{l}
\arg\min_{\beta \in \mathds{R}^{p}} \frac{n}{m} \left\|Z\beta\right\|_{2}^{2}
\\
\\
s. t. \left\|\frac{n}{m} Z'(Z\hat{\beta}_{X,\lambda_{1}} - Z\beta) \right\|_{\infty} \leq \lambda_{2},
\end{array}
\right.
\end{equation*}
for a given $\lambda_{2}$. We compare this two step procedure with the procedure obtained
using the usual LASSO only: $\hat{\beta}^{L}(\lambda)=\hat{\beta}_{X,\lambda}$
for a given $\lambda$ that may differ from $\lambda_{1}$.
In both cases, the solutions are computed using PCO algorithm. We compute $\hat{\beta}^{L}(\lambda)$
and $\hat{\beta}^{TL}(\lambda_{1},\lambda_{2})$ for $(\lambda,\lambda_{1},\lambda_{2})\in\Lambda^3$
where $\Lambda^3 = \{1.2^{k},k=-50,-49,\ldots,30\}$. In the next subsection, we examine the performance
of each estimator according to the value of the regularization parameters.\\

\noindent {\bf Results.} We illustrate here some of the results obtained in the considered cases.\\

\noindent{\it Case $(n,m)=(7,10)$, $\rho=0.5$, $\sigma=1$ and $\beta^*$ "sparse":}\\
\noindent We simulated $100$ experiments and studied the distribution of
$$ PERF(X) = \frac{\min_{(\lambda_{1},\lambda_{2})\in\Lambda^{2}} \| X(\hat{\beta}^{TL}(\lambda_{1},\lambda_{2})-\beta^*)
      \|_{2}^{2} }{ \min_{\lambda\in\Lambda} \| X(\hat{\beta}^{L}(\lambda)-\beta^*)
               \|_{2}^{2}}, $$
$$ PERF(Z) = \frac{\min_{(\lambda_{1},\lambda_{2})\in\Lambda^{2}} \| Z(\hat{\beta}^{TL}(\lambda_{1},\lambda_{2})-\beta^*)
      \|_{2}^{2} }{ \min_{\lambda\in\Lambda} \| Z(\hat{\beta}^{L}(\lambda)-\beta^*)
               \|_{2}^{2}}, $$
and
$$ PERF(I) = \frac{\min_{(\lambda_{1},\lambda_{2})\in\Lambda^{2}} \| \hat{\beta}^{TL}(\lambda_{1},\lambda_{2})-\beta^*
      \|_{2}^{2} }{ \min_{\lambda\in\Lambda} \| \hat{\beta}^{L}(\lambda)-\beta^*
               \|_{2}^{2}}, $$
over all the experiments.

For example, we plot (Figure~\ref{fig:hist}) the histogram of
$PERF(X)$ (actually, the three distributions where quite similar).
\begin{figure}[ht]
\begin{center}
\includegraphics[width=7cm, height=7cm]{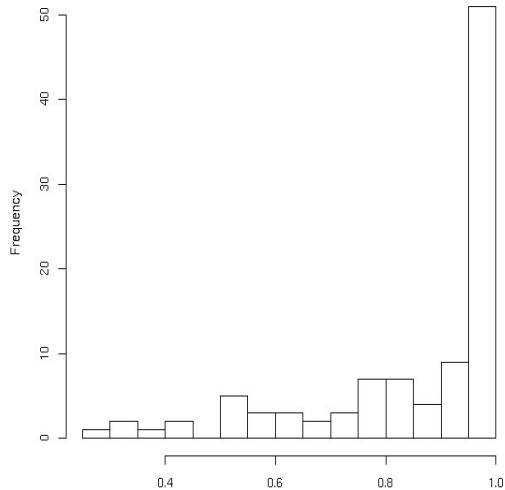}
\caption{\label{fig:hist} Histogram of $PERF(X)$ with $(n,m)=(7,10)$, $\rho=0.5$, $\sigma=1$ and $\beta^*=(3,1.5,0,0,2,0,0,0)$.}
\end{center}
\end{figure}
We observe that in $50\%$ of the simulations, $\min_{(\lambda_{1},\lambda_{2})\in\Lambda^{2}}
\| X(\hat{\beta}^{TL}(\lambda_{1},\lambda_{2})-\beta^*) \|_{2}^{2} = \min_{(\lambda_{1},0)\in\Lambda^{2}}
\| X(\hat{\beta}^{TL}(\lambda_{1},0)-\beta^*) \|_{2}^{2} =
\min_{\lambda\in\Lambda} \| X(\hat{\beta}^{L}(\lambda)-\beta^*)\|_{2}^{2}$. In these cases, the Transductive LASSO
does not improve at all the LASSO. But in the others $50\%$, the Transductive LASSO actually improve the LASSO,
and the improvement is sometimes really important. We give an overview of the results in Table~\ref{tab:frqVSsigma}.\\

\begin{table}[t]
\caption{Evaluation of the mean $ME$ and the quantile $Q_3$ of order $0.3$ of $PERF(I)$, $PERF(X)$ and $PERF(Z)$. In these experiments, $\sigma$ always equals $1$. The case {\it sparse} corresponds to $\beta^*=(3,1.5,0,0,2,0,0,0)$ while the case {\it very sparse} corresponds to $\beta^*=(5,0,0,0,0,0,0,0)$.}
\label{tab:frqVSsigma} 
\begin{center}
\begin{sc}
\begin{tabular}{|l|c|c|c||c|c||c|c||c|r|}
\cline{5-10}
 \multicolumn{4}{c||}{} & \multicolumn{2}{c||}{$PERF(I)$}    & \multicolumn{2}{c||}{$PERF(X)$}    &\multicolumn{2}{c|}{$PERF(Z)$}    \\
\hline
$\beta^*$      &  $(n,m)$ & $\rho$ & $\sigma$ &  $ME$   &  $Q_3$  & $ME$   &  $Q_3$  & $ME$   &  $Q_3$  \\
\hline
\hline
very sparse  & $(7,10)$ &  $0.5$  & $1$ &  $0.74$  & $0.71$  & $0.76$  & $0.71$ & $0.75$  &  $0.70$  \\
\hline
sparse         & $(7,10)$ &  $0.5$  & $1$ &  $0.83$  & $0.76$  & $0.86$  & $0.80$    & $0.88$  &  $0.88$ \\
\hline
sparse         & $(7,20)$ &  $0.5$  & $1$ & $0.84$   & $0.79$  &  $0.84$ & $0.81$    & $0.88$  &  $0.89$    \\
\hline
sparse         & $(20,30)$&  $0.5$   & $1$  &  $0.91$  &  $0.90$ & $0.93$  &   $0.93$ & $0.93$  &  $0.95$    \\
\hline
sparse         & $(20,30)$ & $0.9$   & $1$ &  $0.91$  &  $0.93$ &  $0.94$ &  $0.95$ & $0.93$  &  $0.96$       \\
\hline
sparse         & $(20,30)$ & $0.5$   & $3$ &  $0.90$  &  $0.89$ &  $0.92$ &  $0.92$ & $0.92$  &  $0.93$       \\
\hline
\end{tabular}
\end{sc}
\end{center}
\vskip -0.1in
\end{table}

\noindent{\it The other cases :}\\
\noindent The following conclusions emerge of the experiments:
first, $\beta^*=(5,0,\dots,0)$ leads to a more significative improvement of
the Transductive LASSO compared to the LASSO (Table \ref{tab:frqVSsigma}).
This good performance of the Transductive LASSO can also be observed when $(n,m)=(7,10)$ and $(n,m)=(7,20)$.
However in the case $n>p$ (easy case), i.e., $(n,m)=(20,30)$ and $(n,m)=(20,120)$, the improvement
of the Transductive LASSO with respect to the LASSO becomes less significant (Table~\ref{tab:frqVSsigma}).\\
\noindent Finally, $\rho$ and $\sigma$ have of course a significant influence
on the performance of the LASSO. However these parameters do not seem to have any influence on
the relative performance of the Transductive LASSO with respect to the LASSO
(see for instant the three last rows in Table~\ref{tab:frqVSsigma}, where we kept $(n,m)=(20,30)$).\\
\noindent Quite surprisingly, the relative performance of both estimators does not strongly depend on the
estimation objective $\beta^*$, $X\beta^*$ or $Z\beta^*$, but on the particular experiment we deal with.
According to the realized study and for all the objectives, the
Transductive LASSO performs better than the LASSO in about $50\%$ of the experiments. Otherwise, $\lambda_{1}=0$ is the optimal tuning parameter and then, the LASSO and the
Transductive LASSO are equivalent.

Also surprising is that as often as not, the minimum in
$$
\min_{(\lambda_{1},\lambda_{2})\in\Lambda^{2}}
\| X(\hat{\beta}^{TL}(\lambda_{1},\lambda_{2})-\beta^*) \|_{2}^{2} < \min_{(\lambda_{1},0)\in\Lambda^{2}}
\| X(\hat{\beta}^{TL}(\lambda_{1},0)-\beta^*) \|_{2}^{2} ,$$
{\it does not significantly depend on} $\lambda_{1}$ for a very large range of
values $\lambda_{1}$. This is quite interesting for a practitioner as it means that when we use
the Transductive LASSO, we deal with only a singular unknown tuning parameter (that is $\lambda_{2}$) and not two.\\

\noindent{\bf Discussion on the regularization parameter.}
Finally, we would like to point out the importance of the tuning parameter $\lambda$ (in a general term).
Figure~\ref{fig:perf} illustrates a graph of a typical experiment.
There are two curves on this graph, that represent the quantities $(1/n)\| X(\hat{\beta}^{L}(\lambda)-\beta^*)\|_{2}^{2}$ and $(1/m)\| Z(\hat{\beta}^{L}(\lambda)-\beta^*)\|_{2}^{2}$ with respect to $\lambda$. 
We observe that both functions do not
reach their minimum value for the same value of $\lambda$ (the minimum is highlighted on the graph by a dot), even if these
minimum are quite close.

\begin{figure}[ht]
\begin{center}
\includegraphics[width=7cm, height=7cm]{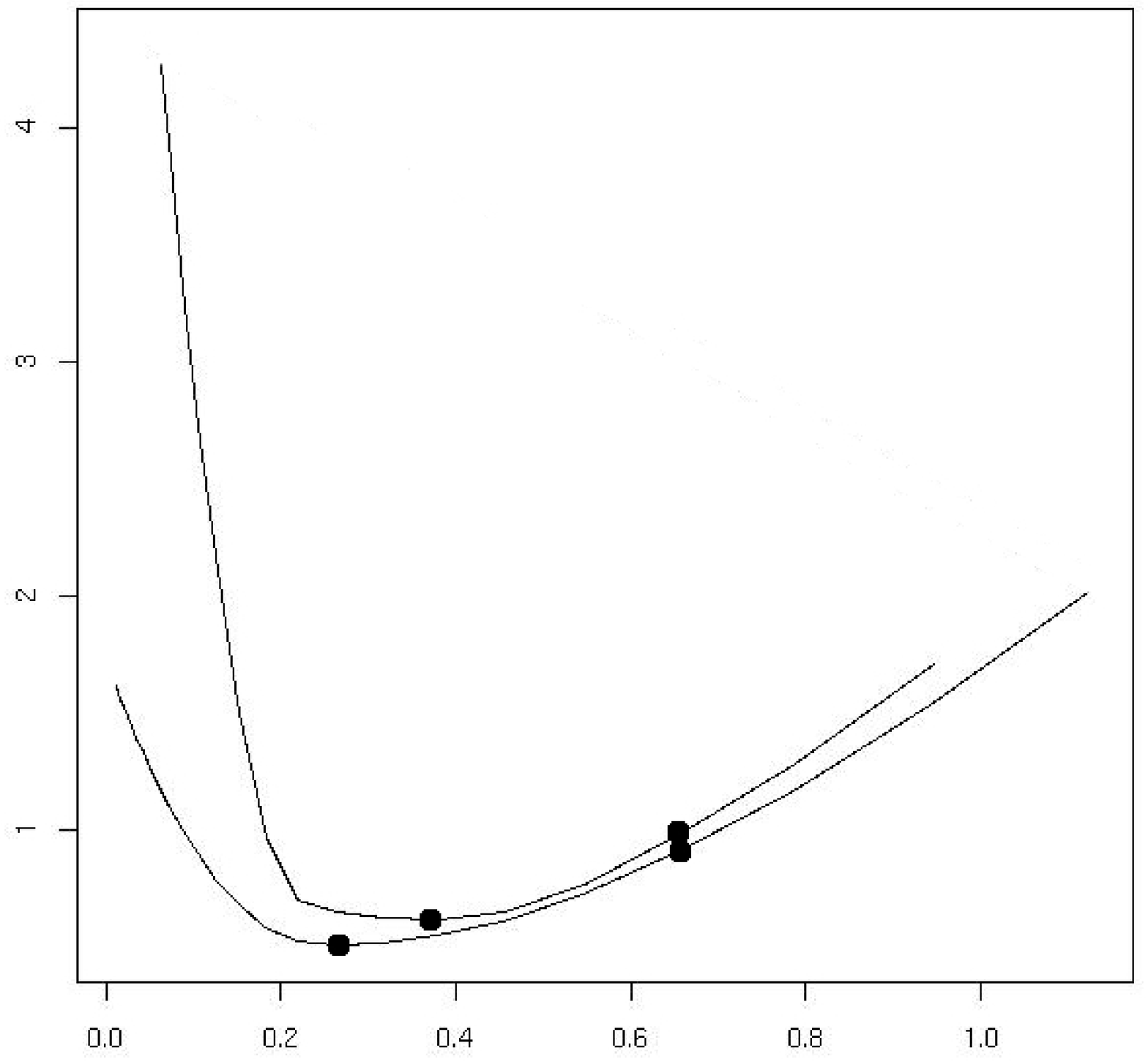}
\caption{\label{fig:perf} Performance vs. $\lambda$.}
\end{center}
\end{figure}

Since we consider variable selection methods, the identification of the true support
$\{j:\,\beta_j^*\neq 0\}$ of the vector $\beta^*$ is also in concern. One expects that the
estimator $\hat{\beta}$ and the true vector $\beta^*$ share the same support at least when
$n$ is large enough. This is known as the variable selection consistency problem and it
has been considered for the LASSO estimator in several works (see
\cite{Bunea_consist,MeinshBulhmConsistLasso,MeinYuSelect,WainSelection,BiYuConsistLasso}).
Recently, \cite{KarimNormSup} provided the variable selection consistency of the Dantzig
Selector. Other popular selection procedures, based on the LASSO estimator, such as the
Adaptive LASSO~\cite{AdapLassoZou}, the SCAD~\cite{FanLiScad}, the S-LASSO~\cite{Mo7SLasso}
and the Group-LASSO~\cite{BachGpLasso}, have also been studied under a variable selection point of view.
Following our previous work \cite{L1MOH}, it is possible to provide such results for
the Transductive LASSO.

The variable selection task has also been illustrated in Figure~\ref{fig:perf}. We reported the minimal value of $\lambda$ for which
the LASSO estimator identifies correctly the non zero components of $\beta^*$. This value of
$\lambda$ is quite different from the values that minimizes the prediction losses.
This observation is recurrent
in almost all the experiments: the estimation $X\beta^*$, $Z\beta^*$ and the support of $\beta^*$ are three
different objectives and have to be treated separately. We cannot expect in general to find a choice for $\lambda$ which makes the LASSO, for instance, has good performance for all the mentioned objective simultaneously.

\section{Conclusion}

In this paper, we propose an extension of the LASSO and the Dantzig Selector for which we
provide theoretical results with less restrictive hypothesis than in previous works.
These estimators have a nice interpretation in terms of transductive prediction. Moreover, we study the practical performance of the proposed transductive estimators on simulated data. It turns out that the benefit using such methods is emphasized when the model is sparse and particularly when the samples sizes ($n$ labeled points and $m$ unlabeled points) and dimension $p$ are such that $n<p<m$.

\section{Proofs}
\label{proofs}

In this section, we state the proofs of our main results.

\subsection{Proof of Propositions \ref{propLSE} and \ref{dualLASSO}}

\begin{proof}[Proof of Proposition \ref{propLSE}]
Let us assume that $(X'X)$ is invertible. Then just remark that the criterion minimized by $\hat{\beta}_{\sqrt{n}I,\lambda}$
is just
$$
      n\left\|\hat{\beta}^{LSE}-\beta\right\|^{2}_{2}+ 2\lambda \|\Xi_{nI} \beta\|_{1}
      =  \sum_{j=1}^{p} \left\{\left[\hat{\beta}^{LSE}_{j}-\beta_{j}\right]^{2}
                     + \frac{2\lambda\xi_{j}(\sqrt{n}I)}{n}|\beta_{j}| \right\}.
$$
So we can optimize with respect to each coordinate $\beta_{j}$ individually. It is quite easy to check
that the solution is, for $\beta_{j}$,
$$ sgn\left(\hat{\beta}^{LSE}_{j}\right)\left(\left|\hat{\beta}^{LSE}_{j}\right|
     -\frac{\lambda\xi_{j}(\sqrt{n}I)}{n}\right)_{+}. $$
The proof for $\hat{\beta}_{\sqrt{n}I,\lambda}$ is also easy as it solves
\begin{equation*}
\left\{
\begin{array}{l}
\arg\min_{\beta \in \mathds{R}^{p}} \left\|\beta\right\|_{1}
\\
\\
s. t. \left\|n\Xi_{nI}^{-1} (\hat{\beta}^{LSE}-\beta)\right\|_{\infty} \leq \lambda.
\end{array}
\right.
\end{equation*}
\end{proof}

\begin{proof}[Proof of Proposition \ref{dualLASSO}]
Let us write the Lagrangian of the program
\begin{equation*}
\left\{
\begin{array}{l}
\arg\min_{\beta \in \mathds{R}^{p}} \left\|A \beta\right\|_{2}^{2}
\\
\\
s. t. \left\|\Xi_{A}^{-1} (A'A) ((\widetilde{X'X})^{-1}X'Y-\beta)\right\|_{\infty} \leq \lambda,
\end{array}
\right.
\end{equation*}
\begin{multline*}
\mathcal{L}(\beta,\gamma,\mu) = \beta (Z'Z) \beta
     + \gamma' \left[ \Xi_{A}^{-1} (A'A) ((\widetilde{X'X})^{-1}X'Y-\beta)  - \lambda E \right]
\\
     + \mu' \left[ \Xi_{A}^{-1} (A'A) (\beta -(\widetilde{X'X})^{-1}X'Y) - \lambda E \right]
\end{multline*}
with $E=(1,\ldots,1)'$, and for any $j$, $\gamma_{j}\geq 0$, $\mu_{j} \geq 0$ and $\gamma_{j}\mu_{j}=0$.
Any solution $\underline{\beta}=\underline{\beta}(\gamma,\mu)$
must satisfy
$$
0 = \frac{\partial \mathcal{L}}{\partial \beta}(\underline{\beta},\lambda,\mu)
= 2 \underline{\beta} (A'A) + (\gamma-\mu)  \Xi_{A}^{-1} (A'A)
$$
so
\begin{equation*}
 (A'A)\underline{\beta} = (A'A)\Xi^{-1}_{A} \frac{\mu-\gamma}{2}.
\end{equation*}
Note that the conditions $\gamma_{j}\geq 0$, $\mu_{j} \geq 0$ and $\gamma_{j}\mu_{j}=0$
means that there is a $\zeta_{j}\in\mathds{R}$ such that $\zeta_{j}=\xi_{j}^{\frac{1}{2}}(A)
(\mu_{j}-\gamma_{j})/2$,
$|\zeta_{j}| = \xi_{j}^{\frac{1}{2}}(A)(\gamma_{j}+\mu_{j})/2$, and so
$\gamma_{j}=2(\zeta_{j}/\xi_{j}^{\frac{1}{2}}(A))_{-}$
and $\mu_{j}=2(\zeta_{j}/\xi_{j}^{\frac{1}{2}}(A))_{+}$,
where $(a)_{+}=max(a;0)$ and  $(a)_{-}=max(-a;0)$. Let also $\zeta$ denote the vector which $j$-th component
is exactly $\zeta_{j}$, we obtain
$$
 (A'A)\underline{\beta} = (A'A)\zeta,
$$
or, using the condition $\Ker(A)=\Ker(X)$, $X\underline{\beta} = X\zeta$ and $A\underline{\beta} = A\zeta$.
This leads to
$$
\mathcal{L}(\underline{\beta},\gamma,\mu)
=
-2Y'X(\widetilde{X'X})^{-1} (A'A) \zeta
+ \zeta' (A'A) \zeta+ 2\lambda \|\Xi_{A} \zeta\|_{1},
$$
and note that the first order condition also implies that $\gamma$ and $\mu$
(and so $\zeta$) maximize $\mathcal{L}$. This ends the proof.
\end{proof}

\subsection{A useful Lemma}

The following lemma will be used in the proofs of Theorems \ref{mainthm} and \ref{mainthm2}.

\begin{lemma}
\label{usefullemma}
Let us put $\varepsilon = (\varepsilon_{1},\ldots,\varepsilon_{n})'$. 
If $\Ker(A)=\Ker(X)$ we have, with probability at least $1-\eta$,
$$ \forall j\in\{1,\ldots,p\},\left| \left[A'A (\widetilde{X'X})^{-1} X'\varepsilon\right]_{j} \right|
                 \leq \xi_{j}(A) \sigma \sqrt{2n\log\frac{p}{\eta}} ,$$
or, in other words,
$$
\|\Xi^{-1}_{A} (A'A)((\widetilde{X'X})^{-1}X'Y-\beta^{*})\|_{\infty} \leq \sigma \sqrt{2n\log\frac{p}{\eta}}.
$$
\end{lemma}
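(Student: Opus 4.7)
Each coordinate $[(A'A)(\widetilde{X'X})^{-1}X'\varepsilon]_j$ is a linear functional of the Gaussian noise vector $\varepsilon\sim\mathcal{N}(0,\sigma^{2}I_{n})$, so it is a centered Gaussian. The immediate task is to compute its variance: with $e_{j}$ the $j$-th canonical basis vector of $\mathds{R}^{p}$,
$$
\mathrm{Var}\bigl([(A'A)(\widetilde{X'X})^{-1}X'\varepsilon]_{j}\bigr)
= \sigma^{2}\, e_{j}'\,(A'A)(\widetilde{X'X})^{-1}(X'X)(\widetilde{X'X})^{-1}(A'A)\, e_{j}.
$$
The goal is to collapse this to $\sigma^{2}\,[(A'A)(\widetilde{X'X})^{-1}(A'A)]_{j,j}=\sigma^{2} n\,\xi_{j}(A)$, which is what the statement of the lemma calls for (up to the square root, reading the bound through the equivalent formulation with $\Xi_{A}^{-1}$).

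\textbf{Algebraic core.} The collapse relies on the identity $(X'X)(\widetilde{X'X})^{-1}(A'A)=A'A$. Taking $(\widetilde{X'X})^{-1}$ to be the Moore--Penrose pseudo-inverse, $(X'X)(\widetilde{X'X})^{-1}$ is the orthogonal projector onto $\mathrm{Ran}(X'X)$. Using $\Ker(A)=\Ker(X)$ we get
$$
\Ker(A'A)=\Ker(A)=\Ker(X)=\Ker(X'X),
$$
hence $\mathrm{Ran}(A'A)=\Ker(A'A)^{\perp}=\Ker(X'X)^{\perp}=\mathrm{Ran}(X'X)$, so the columns of $A'A$ are fixed by this projector, which is precisely the desired identity. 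Applying it on the left sandwich in the variance expression yields $\mathrm{Var}([(A'A)(\widetilde{X'X})^{-1}X'\varepsilon]_{j})=\sigma^{2} n\,\xi_{j}(A)$. The same identity, applied to $X'X\beta^{*}$, gives $(A'A)(\widetilde{X'X})^{-1}X'Y-(A'A)\beta^{*}=(A'A)(\widetilde{X'X})^{-1}X'\varepsilon$, which is what lets us restate the first inequality as the bound on $\|\Xi_{A}^{-1}(A'A)((\widetilde{X'X})^{-1}X'Y-\beta^{*})\|_{\infty}$.

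\textbf{Finish.} A standard Gaussian tail bound, $\Pr(|\mathcal{N}(0,1)|>t)\leq 2\exp(-t^{2}/2)$, applied with $t=\sqrt{2\log(p/\eta)}$, followed by a union bound over $j=1,\ldots,p$, gives that with probability at least $1-\eta$ simultaneously
$$
\bigl|[(A'A)(\widetilde{X'X})^{-1}X'\varepsilon]_{j}\bigr|
\le \xi_{j}(A)^{1/2}\,\sigma\sqrt{2n\log(p/\eta)}
\qquad \text{for all } j.
$$
The main obstacle is cleanly establishing the identity $(X'X)(\widetilde{X'X})^{-1}(A'A)=A'A$ in the degenerate case $p>n$, where $(\widetilde{X'X})^{-1}$ is only a pseudo-inverse; once one notices that the kernel hypothesis forces the ranges of $A'A$ and $X'X$ to coincide, the rest is routine Gaussian concentration.
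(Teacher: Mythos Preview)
Your proof is correct and follows the same route as the paper's: identify each coordinate as a centered Gaussian, compute its variance as $\sigma^{2}\,[(A'A)(\widetilde{X'X})^{-1}(A'A)]_{j,j}=\sigma^{2} n\,\xi_{j}(A)$, and conclude with a Gaussian tail bound plus a union bound over $j$. You are simply more explicit than the paper about the collapse $(X'X)(\widetilde{X'X})^{-1}(A'A)=A'A$ via the kernel hypothesis, and you correctly obtain $\xi_{j}(A)^{1/2}$ in the final bound---this agrees with the $\Xi_{A}^{-1}$ formulation and with how the lemma is invoked in the proofs of Theorems~\ref{mainthm} and~\ref{mainthm2}; the exponent on $\xi_{j}(A)$ in the first display of the lemma is a typo in the paper.
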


\begin{proof}[Proof of the lemma]
By definition,
$\varepsilon\sim \mathcal{N}(0,\sigma^{2} I)$ and
so
$$(A'A) (\widetilde{X'X})^{-1} X' \varepsilon \sim \mathcal{N}(0,
\sigma^{2} (A'A) (\widetilde{X'X})^{-1} (A'A)).$$
So, for all $j$, $[(A'A) (\widetilde{X'X})^{-1} X' \varepsilon]_{j}$ comes
from a $ \mathcal{N}(0,\sigma^{2} \xi_{j}^{2}(A))$ distribution.
This implies the first point, the second one is trivial using $Y=X\beta^{*}+\varepsilon$.
\end{proof}

\subsection{Proof of Theorems \ref{mainthm} and \ref{mainthm2}}

\begin{proof}[Proof of Theorem \ref{mainthm}]
By definition of $ \hat{\beta}_{A,\lambda}$ we have
\begin{multline*}
-2Y'X(\widetilde{X'X})^{-1} (A'A) \hat{\beta}_{A,\lambda}
   + \left(\hat{\beta}_{A,\lambda}\right)' (A'A)  \hat{\beta}_{A,\lambda}
       + 2\lambda \|\Xi_{A'A} \hat{\beta}_{A,\lambda}\|_{1}
  \\
 \leq   2Y'X(\widetilde{X'X})^{-1} (A'A)\beta^{*} + (\beta^{*})' (A'A) \beta^{*} + 2\lambda \|\Xi_{A} \beta^{*}\|_{1}.
\end{multline*}
Since $Y=X\beta^{*}+\varepsilon$, we obtain
\begin{multline*}
2 (\beta^{*})'X'X(\widetilde{X'X})^{-1} (A'A) \left( \beta^{*} - \hat{\beta}_{A,\lambda}\right)
   + \left(\hat{\beta}_{A,\lambda}\right)' (A'A)  \hat{\beta}_{A,\lambda}
     - (\beta^{*})' (A'A) \beta^{*}
\\
+ 2 \varepsilon' X(\widetilde{X'X})^{-1} (A'A) \left( \beta^{*} - \hat{\beta}_{A,\lambda}\right)
 \leq 2\lambda \|\Xi_{A} \beta^{*}\|_{1} - 2\lambda \|\Xi_{A} \hat{\beta}_{A,\lambda}\|_{1}.
\end{multline*}
Now, if $\Ker(X)=\Ker(A)$ then we have $X'X(\widetilde{X'X})^{-1} (A'A) = (A'A)$ and then the previous inequality leads to
\begin{multline}
\label{eqproof1}
\left(\beta^{*} - \hat{\beta}_{A,\lambda}\right)' (A'A) \left( \beta^{*} - \hat{\beta}_{A,\lambda}\right)
 \\
\leq 2 \varepsilon' X(\widetilde{X'X})^{-1} (A'A) \left(\hat{\beta}_{A,\lambda} - \beta^{*} \right)
 + 2\lambda \|\Xi_{A} \beta^{*}\|_{1} - 2\lambda \|\Xi_{A} \hat{\beta}_{A,\lambda}\|_{1}.
\end{multline}
Now we have to work on the term $2 \varepsilon' X(\widetilde{X'X})^{-1}(A'A)\left(\hat{\beta}_{A,\lambda}-\beta^{*}\right)$.
Note that
\begin{eqnarray*}
2 \varepsilon' X(\widetilde{X'X})^{-1} (A'A) \left(\hat{\beta}_{A,\lambda}-\beta^{*}\right)
&
 =
&
2 \sum_{j=1}^{p} \left(\hat{\beta}_{A,\lambda}-\beta^{*}\right)_{j}
           \left[(A'A) (\widetilde{X'X})^{-1} X'\varepsilon\right]_{j}
\\
& 
\leq
&
2 \sum_{j=1}^{p} \left|\left(\hat{\beta}_{A,\lambda}-\beta^{*}\right)_{j}\right|
                    \left| \left[(A'A) (\widetilde{X'X})^{-1} X'\varepsilon\right]_{j} \right|
\\
&
\leq
&
2 \sigma\sqrt{2n\log\left(\frac{p}{\eta}\right)}
    \sum_{j=1}^{p} \xi_{j}^{\frac{1}{2}}(A) \left|\left(\hat{\beta}_{A,\lambda}\right)_{j}-\beta^{*}_{j}\right|
\end{eqnarray*}
with probability at least $1-\eta$, by Lemma \ref{usefullemma}.
We plug this result into Inequality~\eqref{eqproof1} (and replace $\lambda$ by its
value $2\sigma\sqrt{2n\log(p/\eta)}$) to obtain
\begin{multline*}
\left(\beta^{*} - \hat{\beta}_{A,\lambda}\right)' (A'A) \left( \beta^{*} - \hat{\beta}_{A,\lambda}\right)
 \\
\leq
2\sigma\sqrt{2n\log\left(\frac{p}{\eta}\right)}  \sum_{j=1}^{p} \xi_{j}^{\frac{1}{2}}(A) \Biggl\{
 \left|\left(\hat{\beta}_{A,\lambda}\right)_{j}-\beta^{*}_{j}\right|
  + 2 \left(\left|\beta^{*}_{j}\right| - \left|\left(\hat{\beta}_{A,\lambda}\right)_{j}\right|\right)
\Biggr\}
\end{multline*}
and then
\begin{eqnarray}
\label{STEP}
& & \left(\beta^{*} - \hat{\beta}_{A,\lambda}\right)' (A'A) \left( \beta^{*} - \hat{\beta}_{A,\lambda}\right) 
\nonumber \\
&&
+ 
2 \sigma\sqrt{2n\log\left(\frac{p}{\eta}\right)}  \sum_{j=1}^{p} \xi_{j}^{\frac{1}{2}}(A)
 \left|\left(\hat{\beta}_{A,\lambda}\right)_{j}-\beta^{*}_{j}\right| \nonumber
\\
& 
\leq 
&
4\sigma\sqrt{2n\log\left(\frac{p}{\eta}\right)}  \sum_{j=1}^{p} \xi_{j}^{\frac{1}{2}}(A) \Biggl\{
 \left|\left(\hat{\beta}_{A,\lambda}\right)_{j}-\beta^{*}_{j}\right|
 + 
\left|\beta^{*}_{j}\right| - \left|\left(\hat{\beta}_{A,\lambda}\right)_{j}\right|
\Biggr\}
\nonumber
\\
& 
= 
&
4\sigma\sqrt{2n\log\left(\frac{p}{\eta}\right)}  \sum_{j:\beta^{*}_{j} \neq 0} \xi_{j}^{\frac{1}{2}}(A) \Biggl\{
 \left|\left(\hat{\beta}_{A,\lambda}\right)_{j}-\beta^{*}_{j}\right|
  +
\left|\beta^{*}_{j}\right| - \left|\left(\hat{\beta}_{A,\lambda}\right)_{j}\right|
\Biggr\} 
\nonumber
\\
&
\leq
&
8\sigma\sqrt{2n\log\left(\frac{p}{\eta}\right)}  \sum_{j:\beta^{*}_{j} \neq 0} \xi_{j}^{\frac{1}{2}}(A)
 \left|\left(\hat{\beta}_{A,\lambda}\right)_{j}-\beta^{*}_{j}\right|.
\end{eqnarray}
This implies, in particular, that $\beta^{*} - \hat{\beta}_{A,\lambda}$ is an admissible vector $\alpha$
in Assumption $H(A'A,3)$ because
$$
 \sum_{j=1}^{p} \xi_{j}^{\frac{1}{2}}(A)
 \left|\left(\hat{\beta}_{A,\lambda}\right)_{j}-\beta^{*}_{j}\right|
 \leq 4\sum_{j:\beta^{*}_{j} \neq 0} \xi_{j}^{\frac{1}{2}}(A)
 \left|\left(\hat{\beta}_{A,\lambda}\right)_{j}-\beta^{*}_{j}\right|.
$$
On the other hand, thanks to Inequality \eqref{STEP}, we have
\begin{multline}
\label{eq:profoBulf}
\left(\beta^{*} - \hat{\beta}_{A,\lambda}\right)' (A'A) \left( \beta^{*} - \hat{\beta}_{A,\lambda}\right)
\\
\leq
6\sigma\sqrt{2n\log\left(\frac{p}{\eta}\right)}  \sum_{j:\beta^{*}_{j} \neq 0} \xi_{j}^{\frac{1}{2}}(A)
 \left|\left(\hat{\beta}_{A,\lambda}\right)_{j}-\beta^{*}_{j}\right|
\\
\leq
6\sigma\sqrt{2n
 \sum_{j:\beta^{*}_{j} \neq 0} \left[\left(\hat{\beta}_{A,\lambda}\right)_{j}-\beta^{*}_{j}\right]^{2}
    \sum_{j:\beta^{*}_{j} \neq 0} \xi_{j}(A) \log\left(\frac{p}{\eta}\right) }
\\
\leq
6\sigma\sqrt{\frac{2}{ c(A'A)}
  \left(\beta^{*} - \hat{\beta}_{A,\lambda}\right) ' (A'A) \left( \beta^{*} - \hat{\beta}_{A,\lambda}\right)
  \sum_{j:\beta^{*}_{j} \neq 0} \xi_{j}(M) \log\left(\frac{p}{\eta}\right)},
\end{multline}
where we used Assumption~$H(A'A,3)$ for the last inequality. Then
\begin{equation}
\label{eq:prTropl}
 \left(\beta^{*} - \hat{\beta}_{A,\lambda}\right)' (A'A) \left( \beta^{*} - \hat{\beta}_{A,\lambda}\right)
\leq 72 \frac{\sigma^{2}}{c(A'A)}\log\left(\frac{p}{\eta}\right) \sum_{j:\beta^{*}_{j} \neq 0} \xi_{j}(A).
\end{equation}
A similar reasoning as in~\eqref{eq:profoBulf} leads to
\begin{multline*}
2 \sigma\sqrt{2n\log\left(\frac{p}{\eta}\right)}  \sum_{j=1}^{p} \xi_{j}^{\frac{1}{2}}(A)
 \left|\left(\hat{\beta}_{A,\lambda}\right)_{j}-\beta^{*}_{j}\right|
\\
\leq
8\sigma\sqrt{\frac{2}{ c(A'A)}
  \left(\beta^{*} - \hat{\beta}_{A,\lambda}\right) ' (A'A) \left( \beta^{*} - \hat{\beta}_{A,\lambda}\right)
  \sum_{j:\beta^{*}_{j} \neq 0} \xi_{j}(M) \log\left(\frac{p}{\eta}\right)}.
\end{multline*}
Finally, combine this last inequality with~\eqref{eq:prTropl} to obtain the desired bound for 
$
\left\|\Xi_{A}\left(\beta^{*} - \hat{\beta}_{A,\lambda}\right)\right\|_{1}.
$
This ends the proof.
\end{proof}

\begin{proof}[Proof of Theorem \ref{mainthm2}]
We have
\begin{multline}
\label{p11}
 (\tilde{\beta}_{A,\lambda}-\beta^{*})'(A'A)(\tilde{\beta}_{A,\lambda}-\beta^{*})
 = [\Xi_{A} (\tilde{\beta}_{A,\lambda}-\beta^{*})]'\Xi^{-1}_{A} (A'A)(\tilde{\beta}_{A,\lambda}-\beta^{*})
\\
 \leq \| \Xi_{A} (\tilde{\beta}_{A,\lambda}-\beta^{*})\|_{1}
 \|\Xi^{-1}_{A} (A'A)(\tilde{\beta}_{A,\lambda}-\beta^{*})\|_{\infty}
\\
\leq \| \Xi_{A} (\tilde{\beta}_{A,\lambda}-\beta^{*})\|_{1} \Biggl\{
 \|\Xi^{-1}_{A} (A'A)((\widetilde{X'X})^{-1}X'Y-\beta^{*})\|_{\infty}
\\
 + \|\Xi^{-1}_{A} (A'A)((\widetilde{X'X})^{-1}X'Y-\tilde{\beta}_{A,\lambda})\|_{\infty}
\Biggr\},
\end{multline}
by the constraint in the definition on $\tilde{\beta}_{A,\lambda}$ we have
$$ \|\Xi^{-1}_{A} (A'A)((\widetilde{X'X})^{-1}X'Y-\tilde{\beta}_{A,\lambda})\|_{\infty} \leq \lambda, $$
while Lemma \ref{usefullemma} implies that for $\lambda = 2\sigma\sqrt{2n\log(p/\eta)}$ we have
$$ \|\Xi^{-1}_{A} (A'A)((\widetilde{X'X})^{-1}X'Y-\beta^{*})\|_{\infty} \leq \frac{\lambda}{2} ,$$
with probability at least $1-\eta$; and so:
$$
(\tilde{\beta}_{A,\lambda}-\beta^{*})'(A'A)(\tilde{\beta}_{A,\lambda}-\beta^{*})
\leq \frac{3\lambda}{2} \| \Xi_{A} (\tilde{\beta}_{A,\lambda}-\beta^{*})\|_{1}.
$$
Moreover note that, by definition,
\begin{multline*}
0 \leq \|\Xi_{A} \beta^{*}\|_{1}-\|\Xi_{A} \tilde{\beta}_{A,\lambda}\|_{1}
\\
= \sum_{\beta^{*}_{j} \neq 0} \xi_{j}^{\frac{1}{2}}(A) \left|\beta^{*}_{j}\right|
        - \sum_{\beta^{*}_{j} \neq 0} \xi_{j}^{\frac{1}{2}}(A) \left|(\tilde{\beta}_{A,\lambda})_{j}\right|
        - \sum_{\beta^{*}_{j} = 0} \xi_{j}^{\frac{1}{2}}(A) \left|(\tilde{\beta}_{A,\lambda})_{j}\right|
\\
\leq \sum_{\beta^{*}_{j} \neq 0} \xi_{j}^{\frac{1}{2}}(A)
\left|\beta^{*}_{j}-(\tilde{\beta}_{A,\lambda})_{j}\right|
     - \sum_{\beta^{*}_{j} = 0} \xi_{j}^{\frac{1}{2}}(A)
\left|\beta^{*}_{j}-(\tilde{\beta}_{A,\lambda})_{j}\right|,
\end{multline*}
this implies that $\beta^{*}-(\tilde{\beta}_{A,\lambda})$ is an admissible vector in the
relation that defines Assumption $H(A'A,1)$.
Let us combine this result with Inequality~\eqref{p11}, we obtain
\begin{multline}
\label{p12}
(\tilde{\beta}_{A,\lambda}-\beta^{*})'(A'A)(\tilde{\beta}_{A,\lambda}-\beta^{*})
\leq
\frac{3\lambda}{2}
\| \Xi_{A} (\beta^{*} - \tilde{\beta}_{A,\lambda} )\|_{1}
\\
\leq 3\lambda \sum_{\beta^{*}_{j} \neq 0} \xi_{j}^{\frac{1}{2}}(A)
\left|\beta^{*}_{j}-(\tilde{\beta}_{A,\lambda})_{j}\right|
\\
\leq
3\lambda \sqrt{
       \left(\sum_{\beta^{*}_{j}\neq 0}\xi_{j}(A)\right)
       \left( \sum_{\beta^{*}_{j}\neq 0} \left|\beta^{*}_{j}-(\tilde{\beta}_{A,\lambda})_{j}\right|^{2}\right)}
\\
\leq
3\lambda  \left( \sum_{\beta^{*}_{j}\neq 0}\xi_{j}(A) \right)^{\frac{1}{2}}
       \sqrt{\frac{1}{n c(A'A)} (\tilde{\beta}_{A,\lambda}-\beta^{*})'(A'A)(\tilde{\beta}_{A,\lambda}-\beta^{*})}
.
\end{multline}
So we have,
$$
(\tilde{\beta}_{A,\lambda}-\beta^{*})'(A'A)(\tilde{\beta}_{A,\lambda}-\beta^{*})
\leq
9 \lambda^{2} \frac{1}{n c(A'A)}  \sum_{\beta^{*}_{j}\neq 0}\xi_{j}(A) ,
$$
and as a consequence, Inequality \eqref{p12} gives the upper bound on
$\| \Xi_{A} (\tilde{\beta}_{A,\lambda}-\beta^{*})\|_{1}$,
and this ends the proof.
\end{proof}

\subsection{Proof of Theorem \ref{mainthm3}}

\begin{proof}[Proof of Theorem \ref{mainthm3}]
The proof is almost the same as in the previous case. For the sake of simplicity, let
us write $\tilde{\beta}^{*}$ instead of $\tilde{\beta}^{*}_{\sqrt{n/m}Z,\lambda_{2}}$ and
the same for $\hat{\beta}^{*}$. We first give a look at the Dantzig Selector:
\begin{multline}
\label{p311}
 \frac{n}{m}
 \left(\tilde{\beta}^{*} -\beta^{*} \right)' Z'Z \left(\tilde{\beta}^{*} -\beta^{*} \right)
 \leq \left\|\tilde{\beta}^{*} - \beta^{*}\right\|_{1}
\left\|\frac{n}{m}  Z'Z \left(\tilde{\beta}^{*} -\beta^{*} \right) \right\|_{\infty}
\\
\leq
\left\|\tilde{\beta}^{*} -\beta^{*}\right\|_{1} \Biggl\{
\left\|\frac{n}{m} Z'  \left( Z \tilde{\beta}^{*} - \check{Y}_{\lambda_{1}} \right) \right\|_{\infty}
+ \left\|\frac{n}{m} Z'  \left( Z \beta^{*} - \check{Y}_{\lambda_{1}} \right)  \right\|_{\infty}
\Biggr\}
\\
\leq
\left\|\tilde{\beta}^{*} -\beta^{*}\right\|_{1} \Biggl\{
\left\|\frac{n}{m} Z' \left( Z \tilde{\beta}^{*} - \check{Y}_{\lambda_{1}} \right) \right\|_{\infty}
+
\left\|X'  \left( X \beta^{*} - Y \right)  \right\|_{\infty}
\\ +
\left\|X'  \left( X \tilde{\beta}_{X,\lambda_1} - Y \right)  \right\|_{\infty}
+
\left\|\left(\frac{n}{m} Z'Z  -X'X\right) \left( \beta^{*} - \tilde{\beta}_{X,\lambda_1} \right)  \right\|_{\infty}
\Biggr\}.
\end{multline}
By Lemma $\ref{usefullemma}$, for $\lambda_1 = 10^{-1} \sigma\sqrt{2n\log(p/\eta)}$ we have
$$ \|X'Y - X'X \beta^* \|_{\infty} \leq 10 \lambda_1 ,$$
with probability at least $1-\eta$. On the other hand, we have
$$ \| \beta^{*} - \tilde{\beta}_{X,\lambda_1} \|_{1} \leq
       \| \beta^{*} \|_{1} + \| \tilde{\beta}_{X,\lambda_1} \|_{1} \leq 2 \| \beta^{*} \|_{1}, $$
by definition of the Dantzig Selector. Then, let $u = (\beta^{*} - \tilde{\beta}_{X,\lambda_1} )/2 $ and use Inequality \eqref{hypoinf} for this specific $u$. This ensures that
\begin{equation}
\label{eq:vene}
\left\|\left(\frac{n}{m} Z'Z  -X'X\right) \left( \beta^{*} - \tilde{\beta}_{X,\lambda_1} \right)  \right\|_{\infty} \leq 2 \lambda_1.
\end{equation}
The definition of the Dantzig Selector also implies that
$$ \left\|X'  \left( X \tilde{\beta}_{X,\lambda_1 } - Y \right)  \right\|_{\infty} \leq \lambda_1 ,$$
and finally the definition of the estimator leads to
$$ \left\|\frac{n}{m} Z' \left( Z \tilde{\beta}^{*} - \check{Y}_{\lambda_{1}} \right) \right\|_{\infty}
\leq \lambda_2 = \lambda_1, $$
and as a consequence, Inequality \eqref{p311} becomes
$$
\frac{n}{m}
 \left(\tilde{\beta}^{*} -\beta^{*} \right)' Z'Z \left(\tilde{\beta}^{*} -\beta^{*} \right)
\leq
14 \lambda_1 \left\|\tilde{\beta}^{*} - \beta^{*}\right\|_{1}.
$$
Using the fact that $\|\tilde{\beta}^{*}\|_{1} \leq \|\beta^{*}\|_{1}$ gives
\begin{multline}
\label{p312}
\frac{n}{m}
 \left(\tilde{\beta}^{*} -\beta^{*} \right)' Z'Z \left(\tilde{\beta}^{*} -\beta^{*} \right)
\leq
14 \lambda_1 \left\|\tilde{\beta}^{*} - \beta^{*}\right\|_{1}
\leq 28 \lambda_1 \sum_{\beta^{*}_{j} \neq 0}
\left|\beta^{*}_{j}-(\tilde{\beta}^{*})_{j} \right|
\\
\leq 28 \lambda_1 \sqrt{\left|\left\{j:\beta^*_j \neq 0\right\}\right|
       \left( \sum_{\beta^{*}_{j}\neq 0} \left|\beta^{*}_{j}-(\tilde{\beta}^{*})_{j}\right|^{2}\right)}
\\
\leq
28 \lambda_1  \left|\left\{j:\beta^*_j \neq 0\right\}\right|^{\frac{1}{2}}
       \sqrt{\frac{1}{n c(n/m(Z'Z))} \frac{n}{m}
        \left(\tilde{\beta}^{*} -\beta^{*} \right)' Z'Z \left(\tilde{\beta}^{*} -\beta^{*} \right)}
.
\end{multline}
To establish the last inequality, we used Assumption $H'((n/m)Z'Z,1)$.
Then we have,
$$
\frac{n}{m}
 \left(\tilde{\beta}^{*} -\beta^{*} \right)' Z'Z \left(\tilde{\beta}^{*} -\beta^{*} \right)
\leq 28^2 \lambda_1^{2} \left|\left\{j:\beta^*_j \neq 0\right\}\right| \frac{1}{n c(n/m(Z'Z))},
$$
This inequality, combined with~\eqref{p312}, end the proof for the Dantzig Selector.

Now, let us deal with the LASSO case. The dual form of the definition of the estimator leads to
\begin{multline*}
-2\frac{n}{m}
\check{Y}_{\lambda_{1}} Z \hat{\beta}^{*} + \frac{n}{m}
(\hat{\beta}^{*})'Z'Z \hat{\beta}^{*} + 40 \lambda_2 \| \hat{\beta}^{*} \|_1
\\
\leq
-2\frac{n}{m}
\check{Y}_{\lambda_{1}} Z \beta^{*} + \frac{n}{m}
(\beta^{*})'Z'Z \beta^{*} +  40 \lambda_2  \| \beta^{*} \|_1
\end{multline*}
and so
\begin{multline*}
-2 \frac{n}{m}
\tilde{\beta}_{X,\lambda_1}  Z'Z \hat{\beta}^{*} + \frac{n}{m}
(\hat{\beta}^{*})'Z'Z \hat{\beta}^{*}
+ 40\lambda_2 \| \hat{\beta}^{*} \|_1
\\
\leq
-2 \frac{n}{m}
\tilde{\beta}_{X,\lambda_1}  Z'Z \beta^{*} + \frac{n}{m}
(\beta^{*})'Z'Z \beta^{*} + 40 \lambda_2 \| \beta^{*} \|_1
.
\end{multline*}
As a consequence,
\begin{multline*}
\frac{n}{m}
\left(\hat{\beta}^{*} -\beta^{*} \right)' Z'Z \left(\hat{\beta}^{*} -\beta^{*} \right)
\\
\leq
2 \frac{n}{m}
\left(\hat{\beta}^{*} -\beta^{*} \right)' Z'Z \left(\tilde{\beta}_{X,\lambda_1} -\beta^{*} \right)
+
40 \lambda_2 \left(\| \beta^{*} \|_1 -\| \hat{\beta}^{*} \|_1\right).
\end{multline*}
Now, we try to upper bound $ \left(\hat{\beta}^{*} -\beta^{*} \right)' Z'Z \left(\tilde{\beta}_{X,\lambda_1}
-\beta^{*} \right)$. We remark that
\begin{eqnarray*}
 & & \frac{n}{m} \left(\hat{\beta}^{*} -\beta^{*} \right)' Z'Z \left(\tilde{\beta}_{X,\lambda_1} -\beta^{*} \right)
\leq
\left\| \hat{\beta}^{*} -\beta^{*} \right\|_{1}  \left\| \frac{n}{m}(Z'Z)
       \left(\tilde{\beta}_{X,\lambda_1} -\beta^{*}\right) \right\|_{\infty}
\\
&
\leq
&
\left\| \hat{\beta}^{*} -\beta^{*} \right\|_{1}  \Biggl[
\left\| \left((\frac{n}{m}Z'Z-X'X\right)
       \left(\tilde{\beta}_{X,\lambda_1} -\beta^{*}\right) \right\|_{\infty}
\\
&&
+ \left\| X'X
       \left(\tilde{\beta}_{X,\lambda_1} -\beta^{*}\right) \right\|_{\infty}
\Biggr]
\leq 13 \lambda_1 \left\| \hat{\beta}^{*} -\beta^{*} \right\|_{1} ,
\end{eqnarray*}
where we used~\eqref{eq:vene} and the fact that
$$
\left\| X'X
       \left(\tilde{\beta}_{X,\lambda_1} -\beta^{*}\right) \right\|_{\infty} \leq \left\| X'
       \left(X\tilde{\beta}_{X,\lambda_1} - Y \right) \right\|_{\infty} + \left\| X'\varepsilon
        \right\|_{\infty} \leq \lambda_1 + 10 \lambda_1 = 11 \lambda_1.
$$
Then we have
\begin{multline*}
\frac{n}{m}
\left(\hat{\beta}^{*} -\beta^{*} \right)' Z'Z \left(\hat{\beta}^{*} -\beta^{*} \right)
\\
\leq
26 \lambda_1 \left\| \hat{\beta}^{*} -\beta^{*} \right\|_{1} +
40\lambda_2 \left(\| \beta^{*} \|_1 -\| \hat{\beta}^{*} \|_1\right),
\end{multline*}
and so
\begin{multline*}
\frac{n}{m}
\left(\hat{\beta}^{*} -\beta^{*} \right)' Z'Z \left(\hat{\beta}^{*} -\beta^{*} \right)
+14 \lambda_1 \left\| \hat{\beta}^{*} -\beta^{*} \right\|_{1}
\\
\leq
40 \lambda_1 \left( \left\| \hat{\beta}^{*} -\beta^{*} \right\|_{1}+
\| \beta^{*} \|_1 -\| \hat{\beta}^{*} \|_1\right).
\end{multline*}
Up to a multiplying constant, the rest of the proof of
Theorem~\ref{mainthm3} is the same as the last lines in the proof of Theorem~\ref{mainthm}. Then we omit it here.
\end{proof}

\subsection{Proof of Proposition \ref{propmatrix}}

\begin{proof}[Proof of Proposition \ref{propmatrix}]
First, let us remark that
\begin{multline*}
\left\|\left(X'X-\frac{n}{m}Z'Z\right) u \right\|_{\infty}
    = n \sup_{1\leq i \leq p} \sum_{j=1}^{p} u_{j} \left(\frac{X_{i}'X_{j}}{n}
                           -\frac{Z_{i}'Z_{j}}{m} \right)
\\
\leq n \left\| u \right\|_{1} \sup_{1\leq i,j \leq p}   \left|\frac{X_{i}'X_{j}}{n}
                           -\frac{Z_{i}'Z_{j}}{m} \right|.
\end{multline*}
Now, using the "exchangeable-distribution inequality" in \cite{manuscrit} we obtain, for a
given pair $(i,j)$, for any $\tau>0$, with probability at least $1-\eta$,
\begin{multline*}
\frac{X_{i}'X_{j}}{n}-\frac{Z_{i}'Z_{j}}{m}
\leq \frac{\tau k^2}{2n(k+1)^2} \left(\frac{1}{m}\sum_{k=1}^{m} X_{i,k}^{2} X_{j,k}^{2} \right)
       + \frac{\log \frac{1}{\eta}}{\tau}
\\
\leq \frac{\tau k^2 \kappa^2 }{2n(k+1)^2}
       + \frac{\log \frac{1}{\eta}}{\tau}
= \frac{\kappa k}{k-1}\sqrt{\frac{2\log\frac{1}{\eta}}{n}},
\end{multline*}
for $\tau=(\log(1/\eta)(k-1)2n/k\kappa^2)^{1/2}$
and so, by a union bound argument, with probability at least $1-\eta$,
for any pair $(i,j)$,
$$
\left|\frac{X_{i}'X_{j}}{n}-\frac{Z_{i}'Z_{j}}{m}\right|
\leq
\frac{\kappa k}{k-1}\sqrt{\frac{2\log\frac{2p^{2}}{\eta}}{n}}
\leq
\frac{2 \kappa k}{k-1}\sqrt{\frac{2\log\frac{p}{\eta}}{n}},
$$
(where we used $p\geq 2$).
\end{proof}


\bibliographystyle{alpha}
\bibliography{GenLasDan}

\end{document}